\newtheorem{theorem}{Theorem}[section]
\newtheorem{proposition}[theorem]{Proposition}
\newtheorem{lemma}[theorem]{Lemma}
\newtheorem{corollary}[theorem]{Corollary}
\theoremstyle{definition}
\newtheorem{definition}[theorem]{Definition}
\newtheorem{example}[theorem]{Example}
\theoremstyle{remark}
\newtheorem{remark}[theorem]{Remark}
\numberwithin{equation}{section}
\begin{document}
	
	\title[A representation of hyponormal $\mathcal{AN}$-operators]{A representation of hyponormal absolutely norm attaining operators}
	
	\author{Neeru Bala}
	\address{Department of Mathematics, Indian Institute of Technology - Hyderabad, Kandi, Sangareddy, Telangana, India 502 285.}
	\email{ma16resch11001@iith.ac.in}
	
	\author{Ramesh G.}
	\address{Department of Mathematics, Indian Institute of Technology - Hyderabad, Kandi, Sangareddy, Telangana, India 502 285.}
	\email{rameshg@iith.ac.in}
	\subjclass[2010]{47A10; 47B15}
	\date{\today}
	\keywords{Absolutely norm attaining operator, essential spectrum, hyponormal operator.}
	\begin{abstract}
In this article, we characterize absolutely norm attaining normal operators in terms of the essential spectrum. Later we prove a structure theorem for hyponormal absolutely norm attaining (or $\mathcal{AN}$-operators in short) and deduce  conditions for the normality of the operator.
	\end{abstract}
\maketitle
\section{Introduction}
The class of hyponormal operators is an important class of non-normal operators. The problem "when is a hyponormal operator normal?" is studied by several researchers. It is known that a compact hyponormal operator is normal (see \cite{ANDOhy,berbariab1962} for more details). We refer \cite{MARTIN} for several other such sufficient conditions. One more important result in this direction is that a hyponormal operator whose spectrum has zero area measure is normal, which follows by Putnam's inequality \cite[Theorem 1]{PUTNAM}.

In this article, we consider similar questions by weakening the above mentioned conditions. More precisely  we replace the compact operator by an operator in the bigger class, namely $\mathcal{AN}$-class of operators. Another important result we prove is that a hyponormal $\mathcal{AN}$-operator whose essential spectrum and the Weyl spectrum coincide must be normal. To prove these results we first establish a representation of hyponormal $\mathcal{AN}$-operators.

Let $H$ be a complex Hilbert space and $T$ be a bounded linear operator on $H$. Then $T$ is called \textit{norm attaining} if there exist $x\in H$, $\|x\|=1$ such that $ \|Tx\|=\|T\|$,  and \textit{ absolutely norm attaining} or $\mathcal{AN}$-operator, if for any closed subspace $M$ of $H$, the operator $T|_M:M\rightarrow H$ is norm attaining, that is there exist $x\in M,\,\|x\|=1$ such that $\|T|_Mx\|=\|Tx\|=\|T|_M\|$. The set of absolutely norm attaining operators is a subclass of norm attaining operators and contains the space of all compact operators, isometries. In general, it contains partial isometries with finite-dimensional null space. This class was first studied by Carvajal and Neves in \cite{CARVAJAL }. Structure of positive absolutely norm attaining operators has been studied in \cite{PAL,RAMpara,VENKU}. In \cite{VENKU}, a characterization of normal and self-adjoint $\mathcal{AN}$-operators are studied. In general, the adjoint of an $\mathcal{AN}$-operator need not be $\mathcal{AN}$ (see \cite{CARVAJAL } for more details), but when the operator is normal, this is indeed true.

In this article, first we study a characterization of normal $\mathcal{AN}$-operators in terms of the essential spectrum. Later we prove a structure theorem for hyponormal $\mathcal{AN}$-operators and as a consequence, we deduce that a compact hyponormal operator is normal. In the end, we prove that if an $\mathcal{AN}$-operator and its adjoint are paranormal then the operator must be normal.

In the remaining part of this section, we introduce basic notions and notations used in the article. In the second section, we give a characterization of normal $\mathcal{AN}$-operators. In the last section, we prove a representation theorem for the hyponormal $\mathcal{AN}$-operators and deduce important consequences.

\subsection{Preliminaries}
We denote the space of all bounded linear operators from $H_1$ to $H_2$ by $\mathcal{B}(H_1,H_2)$. For $T\in\mathcal{B}(H_1,H_2)$, $R(T)$ and $N(T)$ denote the range and null space of $T$, respectively. If $R(T)$ is finite dimensional, then $T$ is called a \textit{finite-rank operator}. A bounded linear operator $T$ is said to be \textit{compact}, if $T$ maps every bounded set in $H_1$ into a pre-compact set in $H_2$. The set of all finite-rank operators and compact operators in $\mathcal{B}(H_1,H_2)$ are denoted by $\mathcal{F}(H_1,H_2)$ and $\mathcal{K}(H_1,H_2)$, respectively. In particular $\mathcal{F}(H):=\mathcal{F}(H,H)$ and $\mathcal{K}(H):=\mathcal{K}(H,H)$.

If $T\in\mathcal{B}(H_1,H_2)$, then the \textit{adjoint} operator $T^*:H_2\rightarrow H_1$ is a bounded linear operator satisfying
$\langle Tx,y\rangle=\langle x,T^*y\rangle,\,\forall\,x\in H_1,\,y\in H_2$.

For $T\in\mathcal{B}(H)$, $\rho(T)=\{\lambda\in\mathbb{C}:T-\lambda I\text{ is invertible in }\mathcal{B}(H)\}$ is called the \textit{resolvent} set of $T$ and $\sigma(T)=\mathbb{C}\setminus\rho(T)$ is called the \textit{spectrum} of $T$.

An operator $T\in\mathcal{B}(H)$ is said to be \textit{Fredholm}, if $R(T)$ is closed, $N(T)$ and $N(T^*)$ are finite dimensional. In this case, the index of $T$ is defined by $$ind(T)=\dim N(T)-\dim N(T^*).$$ The \textit{essential spectrum}, and the \textit{Weyl spectrum} of $T$ are defined by
\begin{align*}
\sigma_{ess}(T):=&\{\lambda\in\mathbb{C}:T-\lambda I\text{ is not Fredholm}\},\\
\omega(T):=&\{\lambda\in\mathbb{C}:T-\lambda I\text{ is not Fredholm of index } 0 \},
\end{align*}
respectively. We define  $$\pi_{00}(T)={\{\lambda \in \sigma(T): \lambda \; \text{is an isolated eigenvalue with finite multiplicity}}\}.$$ The quantity $m_e(T):=\inf\{\lambda:\lambda\in\sigma_{ess}(|T|)\}$ is called the \textit{essential minimum modulus} of $T$.
For more details about Fredholm theory and the essential spectrum, we refer to \cite{BAX,MULL,SCH}.

If $M$ is a closed subspace of $H$, then the unit sphere in $M$ is denoted by $S_M=\{x\in M:\|x\|=1\}$. For $T\in\mathcal{B}(H)$, $M$ is said to be invariant under $T$ if $TM\subseteq M$. For a non-negative real number $r$, the open disc and circle with centre $0$ and radius $r$ are denoted by $D(0,r):=\{z\in\mathbb{C}:|z|<r\}$ and $C(0,r):=\{z\in\mathbb{C}:|z|=r\}$, respectively.

For $T\in\mathcal{B}(H)$, $m(T):=\inf{\{\|Tx\|; x\in H_1,\; \|x\|=1}\}$ is called the \textit{minimum modulus} of $T$. For more details, we refer to \cite{CARVAJALNEVES, GAN}.

 Let $T\in\mathcal{B}(H)$. Then $T$ is said to be \textit{normal} if $T^*T=TT^*$, \textit{self-adjoint} if $T^*=T$ and \textit{positive} if $\langle Tx,x\rangle\geq 0,\,\forall x\in H.$
 \begin{definition}
 Let $T\in\mathcal{B}(H)$. Then $T$ is called
 \begin{enumerate}
 	\item \textit{hyponormal}, if $TT^*\leq T^*T$.
 	\item \textit{paranormal}, if $\|Tx\|^2\leq\|T^2x\|\|x\|,\,\forall\, x\in H$.
 \end{enumerate}
 \end{definition}
Note that every hyponormal operator is paranormal.
\begin{theorem}\cite{ANDOhy,berbariab1962}\label{compact hypo}
	Every compact hyponormal operator $T\in\mathcal{B}(H)$ is normal.
\end{theorem}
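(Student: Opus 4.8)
The plan is to exploit the eigenstructure forced by hyponormality together with the spectral theory of compact operators. First I would record two standard consequences of the inequality $TT^* \le T^*T$, which is equivalent to $\|T^*x\| \le \|Tx\|$ for all $x \in H$. Since $T - \lambda I$ is again hyponormal for every $\lambda \in \mathbb{C}$, applying this inequality to $T - \lambda I$ shows that $Tx = \lambda x$ forces $T^*x = \bar\lambda x$; that is, every eigenvector of $T$ is an eigenvector of $T^*$ with the conjugate eigenvalue. A one-line computation then gives that eigenvectors attached to distinct eigenvalues are orthogonal: if $Tx = \lambda x$ and $Ty = \mu y$ with $\lambda \neq \mu$, then $\lambda \langle x,y\rangle = \langle Tx, y\rangle = \langle x, T^*y\rangle = \mu \langle x, y\rangle$, so $\langle x,y\rangle = 0$.

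Next I would let $M$ denote the closed linear span of all eigenvectors of $T$ corresponding to nonzero eigenvalues. Because each such eigenvector is a common eigenvector of $T$ and $T^*$, the subspace $M$ is invariant under both $T$ and $T^*$, so $M$ reduces $T$ and $M^\perp$ is invariant under $T$ as well. Using the orthogonality of distinct eigenspaces and the fact that (by compactness) each nonzero eigenvalue has a finite-dimensional eigenspace, one obtains an orthonormal basis of $M$ consisting of common eigenvectors of $T$ and $T^*$; in this basis $T|_M$ is diagonal, and hence normal.

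It then remains to analyse $S := T|_{M^\perp}$, which inherits compactness and hyponormality and, by construction of $M$, has no nonzero eigenvalues. The key step is here: by the spectral theory of compact operators, $\sigma(S) \setminus \{0\}$ consists only of eigenvalues, so the absence of nonzero eigenvalues forces $\sigma(S) = \{0\}$ and hence spectral radius $r(S) = 0$. Since hyponormal operators are paranormal (as noted in the excerpt) and paranormal operators are normaloid, satisfying $\|S\| = r(S)$, this yields $\|S\| = 0$, so $S = 0$. Combining the two pieces gives $T = T|_M \oplus 0$ on $H = M \oplus M^\perp$, a direct sum of a normal operator and the zero operator, which is normal.

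I expect the main obstacle to be twofold. One is the verification that $S$ genuinely has no nonzero eigenvalues: one must check that an eigenvector of the restriction $T|_{M^\perp}$ is an eigenvector of $T$ itself, which relies on the invariance of $M^\perp$ under $T$ established above. The other essential ingredient is the normaloid property $\|S\| = r(S)$ for hyponormal (paranormal) $S$, since this is precisely what converts the spectral information $r(S) = 0$ into the norm estimate $\|S\| = 0$ that collapses the complementary part to zero.
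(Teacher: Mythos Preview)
Your argument is correct and follows the classical route to this result (essentially the approach of the cited references of And\^o and Berberian). The paper itself does not supply a proof at the point where Theorem~\ref{compact hypo} is stated; it simply cites it as a known result. Later, however, the paper re-derives the same fact as a corollary of its structure theorem for hyponormal $\mathcal{AN}$-operators (Theorem~\ref{thm hypo char}): for compact $T$ one has $\sigma_{ess}(|T|)=\{0\}$, so in the notation of that theorem $\lambda=0$, the block $H_2$ is trivial, and $T$ collapses to $S_0=\bigoplus_i \lambda_i U_i$ (plus the zero operator on $N(T)$), a direct sum of scalar multiples of finite-dimensional unitaries, hence normal.

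The difference is one of machinery rather than idea. Your proof is self-contained: it exploits directly that eigenvectors of a hyponormal operator are shared with its adjoint, builds the reducing subspace $M$ from the eigenvectors of $T$, and then kills the complementary piece via the normaloid property $\|S\|=r(S)$. The paper's derivation instead peels off the eigenspaces of $|T|$ one at a time using the block-matrix reduction of Lemma~\ref{lemma red subspace of hyponoral op}, and the vanishing of the complement falls out of the $\mathcal{AN}$ spectral description rather than from normaloidness. Your route is more elementary and does not invoke the $\mathcal{AN}$ apparatus; the paper's route is designed to exhibit Theorem~\ref{thm hypo char} as a unifying tool from which the classical statement drops out as a special case.
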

The above result is true for paranormal operators as well, see \cite[Theorem 2]{Istratescuetal} for more details.

\begin{lemma}\cite[Theorem 3]{Lee}\label{inv subspace}
	Suppose $T\in\mathcal{B}(H)$ is a paranormal norm attaining operator. Then $N(|T|-\|T\|I)$ is an invariant subspace for $T$.
\end{lemma}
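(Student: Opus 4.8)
The plan is to show that if $T$ is paranormal and norm attaining, then the eigenspace $M := N(|T| - \|T\|I)$ associated with the top of the spectrum of $|T|$ is $T$-invariant. First I would observe that since $T$ is norm attaining, there exists a unit vector $x$ with $\|Tx\| = \|T\|$, and since $\||T|x\| = \|Tx\|$ for every vector, $\|T\|$ is attained as an eigenvalue of the positive operator $|T|$; hence $M$ is nonempty and consists precisely of those $x$ with $|T|x = \|T\|x$, equivalently (since $|T| \geq 0$ and $\||T|\| = \|T\|$) those unit vectors maximizing $\||T|x\|$. The key numerical fact I would extract is that for $x \in M$ with $\|x\|=1$ we have $\|Tx\| = \|T\|$, so these are exactly the norm-attaining vectors of $T$.

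The heart of the argument is to use Ando's characterization from Theorem \ref{ando para char}: paranormality is equivalent to ${T^*}^2 T^2 - 2\lambda T^* T + \lambda^2 I \geq 0$ for all $\lambda > 0$. I would take a unit vector $x \in M$ and pair this inequality against $x$. Writing $a = \|Tx\|^2 = \langle T^*Tx, x\rangle$ and $b = \|T^2 x\|^2 = \langle {T^*}^2 T^2 x, x\rangle$, the inequality yields $b - 2\lambda a + \lambda^2 \geq 0$ for all $\lambda > 0$. Since $x \in M$ gives $a = \|T\|^2$, and since $\|T^2 x\| \leq \|T\|\,\|Tx\| = \|T\|^2$ always holds (giving $b \leq \|T\|^4$), the quadratic in $\lambda$ is forced to be a perfect square at $\lambda = \|T\|^2 = a$: indeed $b - 2a\cdot a + a^2 = b - a^2 \geq 0$ combined with $b \leq a^2$ forces $b = a^2$, i.e. $\|T^2 x\| = \|T\|\,\|Tx\| = \|T\|^2$. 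This equality in the submultiplicative bound $\|T^2 x\| \leq \|T\|\,\|Tx\|$ is the crucial leverage point.

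From $\|T(Tx)\| = \|T\|\,\|Tx\|$ I would conclude that $Tx$ itself is a norm-attaining vector for $T$: normalizing, $y := Tx/\|Tx\|$ satisfies $\|Ty\| = \|T\|$, hence $|T|y = \|T\|y$, so $y \in M$ and therefore $Tx \in M$. This is exactly the statement that $T M \subseteq M$, i.e. $M$ is invariant under $T$. The main obstacle I anticipate is justifying cleanly that equality in the quadratic inequality propagates to the vector-level equality $|T|(Tx) = \|T\| (Tx)$; this rests on the fact that for a positive operator, $\|Sz\| = \|S\|\,\|z\|$ with $S = |T|$ and $\|S\| = \|T\|$ forces $z$ to be a top eigenvector, which follows from the spectral theorem for $|T|$ (the condition $\langle (\|T\|^2 I - |T|^2) z, z\rangle = 0$ with the operator positive forces $z \in N(\|T\|I - |T|) = M$). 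Once this spectral-theoretic step is in hand, the inclusion $TM \subseteq M$ is immediate and the proof is complete.
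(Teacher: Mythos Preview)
The paper does not supply its own proof of this lemma; it is quoted without argument from \cite[Lemma~3.4]{RAMpara}. Your proof is correct and follows what is essentially the standard route.

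One minor comment: invoking Ando's characterization (Theorem~\ref{ando para char}) is a detour. Pairing that operator inequality against a unit vector $x$ and minimizing over $\lambda$ simply recovers the defining inequality $\|Tx\|^2 \leq \|T^2x\|\,\|x\|$ of paranormality. Applying the definition directly to a unit $x \in M$ yields in one line
\[
\|T\|^2 = \|Tx\|^2 \leq \|T^2x\| \leq \|T\|\,\|Tx\| = \|T\|^2,
\]
forcing $\|T(Tx)\| = \|T\|\,\|Tx\|$ without the quadratic manipulation. The remainder of your argument---that the equality $\||T|z\| = \|T\|\,\|z\|$ implies $z \in N(|T|-\|T\|I)$ because $\langle(\|T\|^2 I - |T|^2)z,z\rangle = 0$ with $\|T\|^2 I - |T|^2 \geq 0$---is exactly the step required and is handled correctly. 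You should also record explicitly that the case $\|T\|=0$ is trivial, so that $Tx\neq 0$ and the normalization $y = Tx/\|Tx\|$ is legitimate.
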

An operator $T\in\mathcal{B}(H)$ is called \textit{absolutely norm attaining} if for every closed subspace $M\subseteq H$, $T|_M$ is norm attaining.  We denote the set of all absolutely norm attaining operators in $\mathcal{B}(H_1,H_2)$ by $\mathcal{AN}(H_1,H_2)$ and $\mathcal{AN}(H):=\mathcal{AN}(H,H)$.

First we recall a few important results related to positive $\mathcal{AN}$-operators which we need to prove our results. For positive absolutely norm attaining operators, we have the following characterizations.

\begin{theorem}\label{paulsenpandey}\cite[Theorem 5.1]{PAL}
	Let $H$ be a complex Hilbert space of arbitrary dimension and let $P$ be a positive operator on $H$. Then $P$ is an $\mathcal{AN}$-operator if and only if $P$ is of the form $P=\alpha I+K+F$, where $\alpha \geq 0$, $K$ is a positive compact operator and $F$ is a self-adjoint finite rank operator.
\end{theorem}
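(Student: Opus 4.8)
The plan is to recast the representation as a spectral condition and to prove the two implications separately, treating sufficiency first since it is the shorter half. For sufficiency, write $P=\alpha I+C$ with $C=K+F$ compact and self-adjoint, so that $\sigma_{ess}(P)=\sigma_{ess}(\alpha I)=\{\alpha\}$. Fix a closed subspace $M$, put $\beta=\|P|_M\|$, and take a maximizing sequence $(x_n)\subseteq S_M$ with $\|Px_n\|\to\beta$; after passing to a subsequence, $x_n\rightharpoonup x\in M$ and $Cx_n\to Cx$ in norm, whence $\beta^2=\alpha^2+2\alpha\langle Cx,x\rangle+\|Cx\|^2$. If $\beta>\alpha$, a one-line estimate combining this identity with $\|Px\|\le\beta\|x\|$ forces $\|x\|=1$, so $x$ attains the norm. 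If $\beta=\alpha$ and $M$ is infinite dimensional, I would use $K\ge 0$ to show that every nonzero vector of $M\cap R(F)^{\perp}$ is an eigenvector of $P$ for $\alpha$ lying in $M$, again giving attainment (the finite-dimensional case is automatic).

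For necessity, assume $\dim H=\infty$ (the finite-dimensional case is trivial) and set $\alpha=m_e(P)=\min\sigma_{ess}(P)$. The goal is the spectral picture: $\sigma_{ess}(P)=\{\alpha\}$, every point of $\sigma(P)\setminus\{\alpha\}$ is an isolated eigenvalue of finite multiplicity, and $\sigma(P)\cap[0,\alpha)$ is finite. Granting this, $P-\alpha I$ is self-adjoint with spectrum accumulating only at $0$ and all nonzero eigenvalues of finite multiplicity, hence compact; its negative part is finite rank and furnishes $F$, while its positive part is the required $K\ge 0$, giving $P=\alpha I+K+F$.

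The heart of the matter, and the step I expect to be the main obstacle, is showing that the $\mathcal{AN}$ property forbids any spreading of the essential spectrum; I would argue by contradiction, manufacturing a restriction that fails to attain its norm. Suppose $a<b$ both belong to $\sigma_{ess}(P)$. Since $P$ is self-adjoint, each spectral window $E((c-\delta,c+\delta))$ for $c\in\{a,b\}$ is infinite dimensional, so I can extract mutually orthonormal approximate eigenvectors $\{u_n\}$ and $\{v_n\}$ with $\|Pu_n-au_n\|\to 0$ and $\|Pv_n-bv_n\|\to 0$. Choosing angles $\theta_n\downarrow 0$ and setting $w_n=\cos\theta_n\,v_n+\sin\theta_n\,u_n$, the closed span $M=\overline{\mathrm{span}}\{w_n\}$ should satisfy $\|P|_M\|=b$ with this value approached but never attained, contradicting that $P|_M$ is norm attaining. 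The delicate point is to control the approximate-eigenvector error terms so that the supremum is exactly $b$ and genuinely unattained; this is where the real care lies. An analogous but easier construction, using genuine eigenvectors, rules out infinitely many eigenvalues in $[0,\alpha)$: such eigenvalues would accumulate at $\alpha$ from below, and the closed span of their eigenvectors would carry a diagonal restriction with supremum $\alpha$ that is not attained.

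Assembling these facts yields the spectral picture and hence the decomposition $P=\alpha I+K+F$. In short, I expect the sufficiency direction and all the spectral bookkeeping to be routine, and the sole genuinely hard point to be the contradiction construction in the non-singleton case, specifically the error estimates guaranteeing that the constructed restriction has an unattained supremum.
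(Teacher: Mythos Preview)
The paper does not contain a proof of this statement: Theorem~\ref{paulsenpandey} is quoted verbatim from \cite[Theorem 5.1]{PAL} and only the subsequent remark (that $\alpha$ can be taken to be $m_e(T)$, with a pointer to \cite{VENKU}) is added. So there is no in-paper argument to compare your proposal against.

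That said, your outline is headed in the right direction and is close in spirit to the original Pandey--Paulsen argument, which also hinges on reducing the $\mathcal{AN}$ property for positive operators to the single-point essential spectrum picture (this is exactly the content of the paper's Theorem~\ref{esschar}, also cited from \cite{RAMpara}). Two comments on the details. For sufficiency, your weak-limit argument for the case $\beta>\alpha$ is clean and correct; in the case $\beta=\alpha$ your claim that any unit vector $x\in M\cap N(F)$ is an $\alpha$-eigenvector is indeed true (from $\alpha^2+2\alpha\langle Kx,x\rangle+\|Kx\|^2=\alpha^2$ and $K\ge 0$ one gets $Kx=0$ when $\alpha>0$; when $\alpha=0$ the restriction is zero). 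You should also note explicitly that $\beta<\alpha$ forces $\dim M<\infty$, which follows because $M\cap N(F)$ has finite codimension in $M$ and on it $\|Px\|\ge\alpha\|x\|$. For necessity, the contradiction construction you sketch---mixing approximate eigenvectors for two distinct essential-spectral points with shrinking angles to produce a restriction whose supremum norm is unattained---is exactly the idea, but the bookkeeping (controlling the error terms $\|Pu_n-au_n\|$, $\|Pv_n-bv_n\|$ and the cross terms so that the supremum on $M$ is \emph{strictly} unattained and equals $b$) is the whole difficulty, and you have not carried it out. Until you do, this direction remains a plan rather than a proof.
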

We remark that the representation in Theorem \ref{paulsenpandey} can be made unique and the value of $\alpha$ can be found out to be $m_e(T)$, the essential minimum modulus of $T$. For full details, we refer to \cite{VENKU}.

In \cite{RAMpara} the following representation for normal $\mathcal{AN}$-operators is proved.

\begin{theorem}\cite[Theorem 3.9, Theorem 3.13]{RAMpara}\label{normal char}
	Let $T\in\mathcal{AN}(H)$ be a normal operator. Then there exist $(H_{\beta},U_{\beta})_{\beta\in\sigma(|T|)}$ such that
	\begin{enumerate}
		\item $H_{\beta}$ is a reducing subspace for $T$,
		\item  $U_{\beta}\in\mathcal{B}(H_{\beta})$ is a unitary operator.
	\end{enumerate}
such that
\begin{enumerate}
	\item $H=\underset{\beta\in\sigma(|T|)}{\oplus}H_{\beta}$,
	\item $T=\underset{\beta\in\sigma(|T|)}{\oplus}\beta U_{\beta}$,
	\item $\sigma(T)\subseteq\underset{\beta\in\sigma(|T|)}{\cup}\beta\mathbb{T}$, where $\mathbb{T}=\{z\in\mathbb{C}:|z|=1\}$.
\end{enumerate}
\end{theorem}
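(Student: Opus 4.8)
The plan is to combine the polar structure of $T$ with the spectral description of the positive $\mathcal{AN}$-operator $|T|$ supplied by Theorem \ref{paulsenpandey}. The starting observation is that $\|Tx\|=\||T|x\|$ for every $x\in H$, since $\|Tx\|^2=\langle T^*Tx,x\rangle=\langle |T|^2x,x\rangle=\||T|x\|^2$; consequently $\|T|_M\|=\||T|\,|_M\|$ for every closed subspace $M$, and the two restrictions attain their norm at exactly the same unit vectors. Hence $T\in\mathcal{AN}(H)$ forces $|T|\in\mathcal{AN}(H)$, and Theorem \ref{paulsenpandey} applies to give $|T|=\alpha I+K+F$ with $\alpha=m_e(|T|)\geq 0$, $K$ positive compact and $F$ self-adjoint finite rank. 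The key consequence I would extract is that $|T|-\alpha I$ is compact and self-adjoint, so $\sigma(|T|)$ is a countable subset of $[0,\infty)$ whose only possible accumulation point is $\alpha$, and every $\beta\in\sigma(|T|)$ with $\beta\neq\alpha$ is an isolated eigenvalue of finite multiplicity.

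Next I would set $H_\beta:=N(|T|-\beta I)$ for each $\beta\in\sigma(|T|)$ and invoke the spectral theorem for the compact self-adjoint operator $|T|-\alpha I$. Since a compact self-adjoint operator has purely atomic spectral measure, its eigenspaces exhaust $H$, yielding the orthogonal decomposition $H=\bigoplus_{\beta\in\sigma(|T|)}H_\beta$ (with the convention that the summand is $\{0\}$ if $\alpha$ fails to be an eigenvalue). This gives the first item of the conclusion. To see that each $H_\beta$ reduces $T$ I would use normality: $TT^*=T^*T$ implies that $T$ commutes with $T^*T=|T|^2$, hence with $|T|$ through the continuous functional calculus, and therefore with every spectral projection of $|T|$; thus $T$ and $T^*$ both leave $H_\beta$ invariant, which is exactly the statement that $H_\beta$ reduces $T$.

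With the reducing decomposition in hand I would analyse $T|_{H_\beta}$. For $\beta\neq 0$ and $x\in H_\beta$ one has $\|Tx\|=\||T|x\|=\beta\|x\|$, so $U_\beta:=\beta^{-1}T|_{H_\beta}$ is an isometry of $H_\beta$. Normality enters decisively again: since $|T^*|^2=TT^*=T^*T=|T|^2$ we have $|T^*|=|T|$, whence $\|T^*x\|=\beta\|x\|$ on $H_\beta$, so $\beta^{-1}T^*|_{H_\beta}=U_\beta^*$ is also an isometry. An operator that is isometric together with its adjoint is unitary, giving the second item. For the degenerate summand $\beta=0$ (if $0\in\sigma(|T|)$) we have $T|_{H_0}=0$ and may take $U_0=I_{H_0}$, since $0\cdot U_0=0$ represents the restriction correctly. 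Reassembling the restrictions over the reducing decomposition yields $T=\bigoplus_{\beta\in\sigma(|T|)}\beta U_\beta$.

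Finally, for the spectral inclusion, each block satisfies $\sigma(\beta U_\beta)=\beta\,\sigma(U_\beta)\subseteq\beta\mathbb{T}$, because the spectrum of a unitary lies on the unit circle. Since the spectrum of an orthogonal direct sum is the closure of the union of the spectra of its blocks, $\sigma(T)\subseteq\overline{\bigcup_\beta\beta\mathbb{T}}$; and since $\bigcup_{\beta\in\sigma(|T|)}\beta\mathbb{T}=\{z\in\mathbb{C}:|z|\in\sigma(|T|)\}$ is the preimage of the closed set $\sigma(|T|)$ under the continuous map $z\mapsto|z|$, it is already closed, giving the third item. The main obstacle I anticipate is not any single computation but the bookkeeping at the essential value $\alpha$: one must argue that the eigenspaces genuinely fill out $H$ with no residual continuous part (guaranteed here precisely because $|T|-\alpha I$ is compact) and correctly treat the possibly infinite-dimensional eigenspace $H_\alpha$ together with the degenerate block at $\beta=0$, so that the direct sums for $H$ and for $T$ are honest orthogonal decompositions.
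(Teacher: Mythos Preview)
This theorem is not proved in the present paper; it is quoted from \cite[Theorems 3.9 and 3.13]{RAMpara} and stated here without argument, so there is no in-paper proof to compare against. That said, your proposal is a correct self-contained proof. You pass from $T\in\mathcal{AN}(H)$ to $|T|\in\mathcal{AN}(H)$, invoke Theorem~\ref{paulsenpandey} to see that $|T|-\alpha I$ is compact self-adjoint, use its eigenspace decomposition to obtain $H=\bigoplus_\beta H_\beta$, and then exploit normality (via $|T^*|=|T|$) to show each block is a scalar multiple of a unitary. The closure argument for $\bigcup_\beta \beta\mathbb{T}$ as a preimage of $\sigma(|T|)$ is a clean way to handle the spectral inclusion.

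Judging from the surrounding material, the route in \cite{RAMpara} is different in spirit: it proceeds iteratively, using Lemma~\ref{inv subspace} to show that $N(|T|-\|T\|I)$ is invariant (reducing, in the normal case), splitting off that block as $\|T\|U_{\|T\|}$, and repeating on the orthogonal complement---exactly the template this paper later adapts in Lemma~\ref{lemma red subspace of hyponoral op} and Theorem~\ref{thm hypo char} for hyponormal $\mathcal{AN}$-operators. Your approach is more economical for the normal case because the full commutation of $T$ with $|T|$ lets you diagonalize in one stroke; the iterative approach, by contrast, only needs invariance of the top eigenspace at each stage, which is why it survives the passage to paranormal and hyponormal operators where $T$ need not commute with $|T|$.
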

In the above theorem, precisely $H_\beta=N(|T|-\beta I)$. The above theorem implies that every normal $\mathcal{AN}$-operator is direct sum of scalar multiple of unitary operators.

\begin{theorem}\cite[Theorem 2.4]{RAMpara}\label{esschar}
	Let $H$ be an infinite dimensional Hilbert space and $T\in\mathcal{B}(H)$ be positive. Then $T\in\mathcal{AN}(H)$ if and only if $\sigma_{ess}(T)$ is singleton set and $[m(T),m_e(T))$ contains at most finitely many points of $\sigma(T)$.
\end{theorem}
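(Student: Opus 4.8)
The plan is to deduce both implications from the Pal--Pandey structure theorem (Theorem \ref{paulsenpandey}), using that for a positive operator $T$ one has $T=|T|$, $m(T)=\inf\sigma(T)$ and $m_e(T)=\inf\sigma_{ess}(T)$, so that all three quantities appearing in the statement are read off the spectrum of $T$ on $[0,\infty)$.

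For the forward implication, suppose $T\in\mathcal{AN}(H)$ is positive and write $T=\alpha I+K+F$ as in Theorem \ref{paulsenpandey}, with $\alpha\geq 0$, $K\geq 0$ compact and $F$ self-adjoint finite rank. Since $C:=K+F$ is compact and self-adjoint, invariance of the essential spectrum under compact perturbations gives $\sigma_{ess}(T)=\sigma_{ess}(\alpha I)=\{\alpha\}$, where infinite-dimensionality of $H$ is used so that $\alpha I-\alpha I=0$ fails to be Fredholm; in particular $m_e(T)=\alpha$. It remains to count $\sigma(T)\cap[m(T),\alpha)$. Each such point lies in $\sigma(T)\setminus\{\alpha\}=\alpha+(\sigma(C)\setminus\{0\})$, i.e.\ is an isolated eigenvalue of finite multiplicity, and those strictly below $\alpha$ correspond to the strictly negative eigenvalues of $C$. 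The key observation is that $C=K+F$ has at most $\dim R(F)$ negative eigenvalues: on the finite-codimensional subspace $N(F)$ one has $C=K\geq 0$, so any subspace on which $C$ is negative definite meets $N(F)$ only trivially and hence has dimension at most $\dim R(F)$. Thus $[m(T),m_e(T))$ contains at most finitely many points of $\sigma(T)$.

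For the converse, assume $\sigma_{ess}(T)=\{\alpha\}$ and that $[m(T),m_e(T))$ meets $\sigma(T)$ in finitely many points; note $\alpha=m_e(T)\geq 0$ since $\sigma(T)\subseteq[0,\infty)$. Put $S:=T-\alpha I$, a self-adjoint operator with $\sigma_{ess}(S)=\{0\}$. I would first show $S$ is compact: for each $\epsilon>0$ the spectral projection $E_S(\{|\lambda|\geq\epsilon\})$ has finite rank, because the part of $\sigma(S)$ outside $(-\epsilon,\epsilon)$ is a compact set disjoint from $\sigma_{ess}(S)=\{0\}$ and hence consists of finitely many eigenvalues of finite multiplicity; since $\|S\,E_S(\{|\lambda|<\epsilon\})\|\leq\epsilon$, $S$ is a norm-limit of finite-rank operators. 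Next, the strictly negative eigenvalues of $S$ are precisely the points of $\sigma(T)\cap[m(T),\alpha)$ translated by $-\alpha$, and by hypothesis these are finitely many, each of finite multiplicity; collecting the corresponding spectral part into a self-adjoint finite-rank operator $F$ and setting $K:=S-F$ yields $K\geq 0$ compact. Then $T=\alpha I+K+F$ has the Pal--Pandey form, so $T\in\mathcal{AN}(H)$.

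The routine inputs are Weyl's invariance of the essential spectrum and the spectral theorem for self-adjoint operators; the substantive steps are the two finite-rank estimates. The main obstacle I anticipate is the converse: one must argue that $\sigma_{ess}(T)=\{\alpha\}$ alone forces $T-\alpha I$ to be genuinely compact (not merely to have essential spectrum $\{0\}$), and then that the finiteness hypothesis on $[m(T),m_e(T))$ is exactly what lets the negative part be peeled off as a finite-rank summand, leaving a positive compact remainder. Checking $\alpha\geq 0$ and that the reconstructed decomposition meets the hypotheses of Theorem \ref{paulsenpandey} are the remaining bookkeeping details.
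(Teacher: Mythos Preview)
The paper does not supply its own proof of this statement: Theorem~\ref{esschar} is quoted verbatim from \cite[Theorem~2.4]{RAMpara} and used as a black box throughout, so there is no in-paper argument to compare against. That said, your proposal is a correct and self-contained derivation from Theorem~\ref{paulsenpandey}. Both implications are sound: in the forward direction, Weyl's theorem gives $\sigma_{ess}(T)=\{\alpha\}$ immediately, and your min-max style bound (a negative-definite subspace for $K+F$ must miss $N(F)$, hence has dimension at most $\operatorname{rank}F$) is exactly the right way to see that only finitely many spectral points fall below $\alpha$. In the converse, the only point that requires care is the passage from $\sigma_{ess}(S)=\{0\}$ to compactness of $S$, and you handle it correctly by observing that for self-adjoint $S$ the set $\sigma(S)\cap\{|\lambda|\geq\epsilon\}$ is a compact subset of isolated finite-multiplicity eigenvalues with no accumulation point away from $0$, hence finite; the decomposition $S=K+F$ by peeling off the (finitely many, finite-multiplicity) negative eigenvalues then reconstructs the Pal--Pandey form. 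The bookkeeping you flag ($\alpha\geq 0$, $|T|=T$, $m_e(T)=\inf\sigma_{ess}(T)$) is routine.
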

\section{Normal $\mathcal{AN}$-operators}

Here we give a spectral characterization of normal absolutely norm attaining operators in terms of the essential spectrum.
\begin{theorem}\label{normal AN}
	Let $H$ be an infinite-dimensional Hilbert space and $T\in\mathcal{B}(H)$ be a normal operator. Then the following are equivalent.
	\begin{enumerate}
		\item\label{AN} $T\in\mathcal{AN}(H)$.
		\item\label{Ball} There exists a non-negative real number $\alpha\geq 0$ such that $\sigma_{ess}(T)\subseteq C(0,\alpha)$ and $D(0,\alpha)$ contains at most finitely many points of $\sigma(T)$.
	\end{enumerate}
\end{theorem}
\begin{proof}
	(\ref{AN})$\Rightarrow$(\ref{Ball}) Let $T\in\mathcal{AN}(H)$. By Theorem \ref{normal char}, $T$ can be represented as
	\begin{equation*}
	T=\underset{\beta\in\sigma(|T|)}{\oplus}\beta U_{\beta},
	\end{equation*}
	where $U_{\beta}$ is unitary operator on $H_{\beta}=N(|T|-\beta I)$. Since $T\in\mathcal{AN}(H)$, we get $|T|\in\mathcal{AN}(H)$ by \cite[Lemma 6.2]{PAL} and consequently $\sigma_{ess}(|T|)$ is singleton set by Theorem \ref{esschar}. Thus $H_{\beta}$ is finite-dimensional subspace of $H$ for all $\beta\ne m_e(|T|)$ (that is $\beta\ne m_e(T)$). Let us write $\alpha=m_e(T)$. As $U_{\beta}$ is a finite rank operator for every $\beta\ne \alpha$, it follows that $\sigma_{ess}(T)\subseteq C(0,\alpha)$.
	
	Also, $\sigma(T)\subseteq \underset{\beta\in\sigma(|T|)}{\cup}\sigma(\beta U_{\beta})$. As $\sigma(U_{\beta})$ is finite for $\beta\ne \alpha$ and by Theorem \ref{esschar} there are at most finitely many $\beta\in\sigma(|T|)$ such that $\beta< \alpha$, say $\beta_1,\beta_2,\ldots\beta_m$ (see \cite[Theorem 3.9]{RAMpara} for details), which imply $$D(0, \alpha)\cap\sigma(T)=\underset{i=1}{\overset{m}{\cup}}[\sigma(\beta_iU_{\beta_i})\cap\sigma(T)]\subseteq \underset{i=1}{\overset{m}{\cup}}\beta_i\sigma(U_{\beta_i}).$$
	For $1\leq i\leq m$, $U_{\beta_i}$ is a finite-rank operator, which implies $\underset{i=1}{\overset{m}{\cup}}\beta_i\sigma(U_{\beta_i})$ is finite and consequently $D(0,\alpha)\cap\sigma(T)$ is a finite subset of $\mathbb{C}$.
	
	
	(\ref{Ball})$\Rightarrow$(\ref{AN}) Let $D(0,\alpha)\cap\sigma(T)=\{\gamma_1,\gamma_2,\ldots\gamma_{n_0}\}$ for $n_0\in\mathbb{N}\cup\{0\}$ and $|\gamma_i|=r_i$ for $1\leq i\leq n_0$.
	By the continuous functional calculus, we know that  $\lambda\in\sigma(T)$ implies $|\lambda|\in\sigma(|T|)$. Therefore $r_1,r_2,\ldots r_{n_0}\in\sigma(|T|)$. Also note that by definition $m_{e}(T)=\alpha$.
	
We claim that $\sigma(|T|)\cap[0,\alpha)=\{r_1,r_2,\ldots r_{n_0}\}$. If there exist some $r< \alpha$ such that $r\in\sigma(|T|)$, then again by the functional calculus $ C(0,r)\cap\sigma(T)\ne\emptyset$. So, $r$ should be one of these $r_i$, $1\leq i\leq n_0$.
	
	Finally, we have that $\sigma_{ess}(|T|)=\{ \alpha \}$ and $[0, \alpha)\cap\sigma(|T|)$ is a finite set, namely $\{r_1,r_2,\ldots r_{n_0}\}$. By Theorem \ref{esschar}, $|T|\in\mathcal{AN}(H)$ and consequently $T\in\mathcal{AN}(H)$ by \cite[Lemma 6.2]{PAL}.
\end{proof}
\begin{remark}
	For non-normal $\mathcal{AN}$ operators part \textit{(\ref{Ball})} of Theorem \ref{normal AN} need not hold. The following example illustrates this.
\end{remark}
\begin{example}\label{counternonnormal}
	Consider the right shift operator $R:\ell^2(\mathbb{N})\rightarrow \ell^2(\mathbb{N})$ defined by $R(x_1,x_2,\ldots)=(0,x_1,x_2,\ldots),\,\forall(x_n)\in \ell^2(\mathbb{N})$. As $R$ is an isometry, it is an $\mathcal{AN}$-operator. We know that $\sigma(R)=\overline{D(0,1)}$, $\sigma_{ess}(R)=C(0,1)$ and $m_e(R)=1$. It is easy to observe that $D(0,1)$ contains uncountably many points of $\sigma(R)$. Though $R\in\mathcal{AN}(\ell^2(\mathbb{N}))$, it does not satisfy part \textit{(\ref{Ball})} of Theorem \ref{normal AN}.
\end{example}
By \cite[Proposition 3.4]{VENKU}, we know that if $T=T^*\in \mathcal{AN}(H)$, then the essential spectrum of $T$ contains at most two points. Here, in fact by Theorem \ref{normal AN} we can characterize self-adjoint $\mathcal{AN}$-operators. A different characterization of self-adjoint $\mathcal{AN}$-operators is given in Theorem $3.3$ and Corollary $4.5$ of \cite{VENKU}.
\begin{corollary}
	Let $T\in\mathcal{B}(H)$ be a self-adjoint operator. Then $T\in\mathcal{AN}(H)$ if and only if there exists $\alpha \geq 0$ such that $\sigma_{ess}(T)\subseteq\{-\alpha,\alpha\}$ and $(-\alpha,\alpha)\cap \sigma(T)$ is at most finite.
\end{corollary}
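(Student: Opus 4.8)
The plan is to obtain this as an immediate corollary of Theorem \ref{normal AN}, since a self-adjoint operator is in particular normal, and so the equivalence (\ref{AN})$\Leftrightarrow$(\ref{Ball}) of that theorem applies verbatim. The only additional structural feature I would exploit is that a self-adjoint operator has real spectrum: by the spectral theorem $\sigma(T)\subseteq\mathbb{R}$, and since $\sigma_{ess}(T)\subseteq\sigma(T)$ (a point outside $\sigma(T)$ makes $T-\lambda I$ invertible, hence Fredholm of index $0$), we also have $\sigma_{ess}(T)\subseteq\mathbb{R}$. Thus the whole discussion can be reduced from the complex plane to the real line, and the task is merely to rewrite the disc-and-circle conditions of part (\ref{Ball}) in terms of their traces on $\mathbb{R}$.

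The two elementary geometric identities that carry out this reduction are
\begin{equation*}
\partial D(0,\alpha)\cap\mathbb{R}=\{-\alpha,\alpha\}\quad\text{and}\quad D(0,\alpha)\cap\mathbb{R}=(-\alpha,\alpha),
\end{equation*}
valid for every $\alpha\ge 0$ (with the convention that for $\alpha=0$ the first set collapses to $\{0\}$ and the second is empty). First I would treat the forward implication: assuming $T\in\mathcal{AN}(H)$, Theorem \ref{normal AN} furnishes some $\alpha\ge 0$ with $\sigma_{ess}(T)\subseteq\partial D(0,\alpha)$ and $D(0,\alpha)\cap\sigma(T)$ finite; intersecting with $\mathbb{R}$ and using $\sigma_{ess}(T),\sigma(T)\subseteq\mathbb{R}$ yields $\sigma_{ess}(T)\subseteq\{-\alpha,\alpha\}$ and $(-\alpha,\alpha)\cap\sigma(T)$ finite, which is exactly the stated condition.

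For the converse I would run the same identities backwards: given $\alpha\ge 0$ with $\sigma_{ess}(T)\subseteq\{-\alpha,\alpha\}$ and $(-\alpha,\alpha)\cap\sigma(T)$ finite, the inclusion $\{-\alpha,\alpha\}\subseteq\partial D(0,\alpha)$ gives $\sigma_{ess}(T)\subseteq\partial D(0,\alpha)$, while the reality of $\sigma(T)$ gives $D(0,\alpha)\cap\sigma(T)=(-\alpha,\alpha)\cap\sigma(T)$, which is finite; part (\ref{Ball}) of Theorem \ref{normal AN} then forces $T\in\mathcal{AN}(H)$. There is no substantive obstacle here, as everything is inherited from the normal case; the only point requiring a moment's care is the degenerate value $\alpha=0$, where one must check that the convention $\{-\alpha,\alpha\}=\{0\}$ is consistent with both directions of the argument. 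It is worth remarking that this recovers, with a sharp description of the admissible spectra, the earlier observation from \cite[Proposition 3.4]{VENKU} that $\sigma_{ess}(T)$ of a self-adjoint $\mathcal{AN}$-operator consists of at most two points.
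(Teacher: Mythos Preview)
Your proposal is correct and follows essentially the same approach as the paper: both invoke Theorem \ref{normal AN} for the normal operator $T$ and then use the reality of the spectrum of a self-adjoint operator to replace $\partial D(0,\alpha)$ by $\{-\alpha,\alpha\}$ and $D(0,\alpha)$ by $(-\alpha,\alpha)$. Your write-up is somewhat more explicit in treating the two implications separately and in noting the degenerate case $\alpha=0$, but there is no substantive difference in strategy.
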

\begin{proof}
	From Theorem \ref{normal AN}, $T\in\mathcal{AN}(H)$ if and only if there exists $\alpha \geq 0$ such that  $\sigma_{ess}(T)\subseteq \partial D(0,\alpha))$ and $D(0,\alpha)\cap\sigma(T)$ is at most finite. Since $T=T^*$, we have $\sigma(T)\subseteq[-\|T\|,\|T\|].$ In this case $D(0,\alpha)\cap\sigma(T)=(-\alpha,\alpha)\cap\sigma(T) $ is at most finite and $\sigma_{ess}(T)\subseteq\{-\alpha,\alpha\}.$
\end{proof}

We conclude this section with a comment on absolutely minimum attaining operators. Recall that $T\in \mathcal B(H_1,H_2)$ is called \textit{minimum attaining}, if there exists $x\in H_1$ with $\|x\|=1$ such that $\|Tx\|=m(T)$.
An operator $T$ is said to be \textit{absolutely minimum attaining} (or $\mathcal{AM}$-operator, in short)
if for every non zero closed subspace $M$ of $H$, the operator $T|_M:M\rightarrow H$ is minimum attaining. We refer \cite{CARVAJALNEVES,GAN,GAN1} and references therein for more detail about $\mathcal{AM}$-operators. The structure of multiplication $\mathcal{AM}$-operators as well as normal $\mathcal{AM}$-operators are discussed in \cite{RAM}.

By imitating the proof of Theorem \ref{normal AN} and using \cite[Theorem 4.4]{RAM}, we can get the following characterization for normal $\mathcal{AM}$-operators.
\begin{theorem}\label{normal AM}
	Let $T\in\mathcal{B}(H)$ be a normal operator. The following are equivalent.
	\begin{enumerate}
		\item\label{AM} $T\in\mathcal{AM}(H)$.
		\item\label{Ball1} There exists a real number $\beta\geq 0$ such that $Ann(0;\beta,\|T\|)$ contains at most finitely many points of $\sigma(T)$ and $\sigma_{ess}(T)\subseteq C(0,\beta)$, where $Ann(0;\beta,\|T\|):=\{z\in\mathbb{C}:\beta<|z|\leq\|T\|\}$.
	\end{enumerate}
\end{theorem}
\begin{remark}
	The spectral decomposition of positive $\mathcal{AM}$-operators is established in \cite{RAM}.
\end{remark}
\section{Hyponormal $\mathcal{AN}$-operators}
In this section, we prove a structure theorem for hyponormal $\mathcal{AN}$-operators and deduce a few important consequences.
\begin{lemma}\label{lemma red subspace of hyponoral op}
	Let $T\in\mathcal{B}(H)$ be a non-zero norm attaining hyponormal operator. If $N\left(|T|-\|T\|I\right)$ is finite dimensional, then it is a reducing subspace for $T$. Moreover $T|_{N(|T|-\|T\|I)}=\|T\|U$, where $U\in\mathcal{B}(N(|T|-\|T\|I))$ is a unitary operator.
\end{lemma}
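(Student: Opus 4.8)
The plan is to set $M := N\bigl(|T|-\|T\|I\bigr)$ and prove the assertions in the order: (i) $T$ maps $M$ into itself and acts isometrically up to the scalar $\|T\|$; (ii) on the finite-dimensional space $M$ this forces $T|_M=\|T\|U$ with $U$ unitary; (iii) $M$ is in fact reducing. First I would record that for every $x\in M$ one has $\|Tx\|=\||T|x\|=\|T\|\,\|x\|$, since $\|Tx\|^2=\langle T^*Tx,x\rangle=\langle |T|^2x,x\rangle=\||T|x\|^2$ while $|T|x=\|T\|x$. Because $T$ is norm attaining, $M\neq\{0\}$ (a maximizing unit vector lies in $M$ by the spectral theorem applied to $|T|$), so the statement is not vacuous; also $T\neq0$ gives $\|T\|>0$. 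Since every hyponormal operator is paranormal, Lemma \ref{inv subspace} applies and yields $TM\subseteq M$.

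Granting $TM\subseteq M$, the map $U:=\tfrac{1}{\|T\|}T|_M\colon M\to M$ preserves norms, hence (by the polarization identity) preserves inner products, i.e.\ is a linear isometry of $M$ into itself. As $M$ is finite-dimensional, an injective linear isometry of $M$ into $M$ is onto, so $U$ is unitary and $T|_M=\|T\|U$. This disposes of the ``moreover'' clause, modulo the reducing claim.

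The main step is to upgrade invariance to reducibility, i.e.\ to show $T^*M\subseteq M$. Writing $T$ as a $2\times2$ operator matrix with respect to the decomposition $H=M\oplus M^\perp$, the inclusion $TM\subseteq M$ kills the lower-left block, so $T=\begin{pmatrix}A & B\\ 0 & D\end{pmatrix}$ with $A=T|_M=\|T\|U$. A direct computation shows the $(1,1)$ block of the self-commutator $T^*T-TT^*$ equals $A^*A-AA^*-BB^*$, and since $A^*A=AA^*=\|T\|^2 I_M$ by unitarity of $U$, this block is exactly $-BB^*$. Hyponormality means $T^*T-TT^*\geq0$; compressing this positive operator to $M$ shows its $(1,1)$ block is positive, i.e.\ $-BB^*\geq0$. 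As $BB^*\geq0$ always, we conclude $BB^*=0$, hence $B=0$. Therefore $T$ is block-diagonal relative to $M\oplus M^\perp$, which is precisely the statement that $M$ reduces $T$.

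I expect the reducing step (iii) to be the crux: the whole argument hinges on the observation that the top modulus eigenspace makes the diagonal block of the self-commutator collapse to $-BB^*$, after which positivity of the compression forces the off-diagonal block to vanish. The finite-dimensionality of $M$ is used essentially only once, to pass from isometry to unitary in step (ii); without it one would need to separately argue that $T|_M$ is surjective onto $M$.
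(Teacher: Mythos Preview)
Your proof is correct and follows essentially the same route as the paper: invoke the invariance of $M=N(|T|-\|T\|I)$ (via paranormality), use finite-dimensionality to upgrade the isometry $\tfrac{1}{\|T\|}T|_M$ to a unitary, and then compute the $(1,1)$ block of $T^*T-TT^*$ in the $2\times2$ matrix representation to force the off-diagonal block to vanish. The paper's argument is identical up to notation (it calls the off-diagonal block $A$ rather than $B$) and it imports the invariance/isometry step from \cite[Lemma~4.3]{RAMpara} rather than deriving the isometry directly as you do.
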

\begin{proof}
	Let $M:=N\left(|T|-\|T\|I\right)$. By Lemma \ref{inv subspace}, we know that $M$ is an invariant subspace for $T$ and $\frac{T}{\|T\|}$ is an isometry on $M$. But as $M$ is finite dimensional, we have that $\frac{T}{\|T\|}$ is unitary on $M$, denote it by $U$. Hence $T|_M=\|T\|U$.
	
	Since $M$ is an invariant subspace for $T$, $T$ has the following matrix representation;
	\begin{equation*}
	T=\begin{bmatrix}
	T|_M&A\\
	0&B
	\end{bmatrix},
	\end{equation*}
	where $A\in\mathcal{B}(M^{\perp},M),\,B\in\mathcal{B}(M^{\perp})$. Since $T$ is hyponormal, we have
	\begin{align*}
	0\leq T^*T-TT^*=\begin{bmatrix}
	-AA^*&(T|_M)^*A-AB^*\\
	A^*T|_M-BA^*&A^*A+B^*B-BB^*
	\end{bmatrix}.
	\end{align*}
	By the positivity of operator matrix, we get $-AA^*\geq 0$, which imply $A=0$. Hence
	\begin{align}\label{eqn2}
	T=\begin{bmatrix}
	T|_M&0\\
	0&B
	\end{bmatrix}.
	\end{align}
	That is, $M$ is a reducing subspace for $T$.
\end{proof}
\begin{remark}
	Note that $B$ is hyponormal in the above representation (\ref{eqn2}). In fact, $T$ is normal if and only if $B$ is normal.
\end{remark}
\begin{proposition}\label{prop rep of hypo AN op}
	Let $T\in\mathcal{B}(H)$ be a non-zero hyponormal $\mathcal{AN}$-operator with $\sigma_{ess}(|T|)=\{\|T\|\}$. If $\pi_{00}(|T|)$ is non-empty and $\pi_{00}(|T|)=\{\lambda_i\}_{i=1}^{m_0}$ for some $m_0\in\mathbb{N}$, then
	\begin{enumerate}
		\item $H=H_1\oplus H_2$, where $H_1=N(|T|-\|T\|I)$, $H_2=\underset{i=1}{\overset{m_0}{\oplus}}N(|T|-\lambda_iI)$.
		\item $T$ has the following representation with respect to $H_1\oplus H_2$.
		\[T=
		\begin{blockarray}{ccc}
		H_1&H_2\\
		\begin{block}{(cc)c}
		\|T\|S_0&A&H_1\\
		0& B&H_2\\
		\end{block}
		\end{blockarray}
		\]
		where
		\begin{enumerate}
			\item $S_0\in\mathcal{B}(H_1)$ is an isomtery.
			\item $A,\,B$ are finite-rank operators with $S_0^*A=0$ and $(A+B)^*(A+B)=\underset{i=1}{\overset{m_0}{\oplus}}\lambda_i^2I_{N(|T|-\lambda_iI)}$. 
		\end{enumerate}
		If $\pi_{00}(T)$ is an empty set then $T=\|T\|S_0$.
	\end{enumerate}
\end{proposition}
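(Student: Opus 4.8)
The plan is to build the block decomposition from the spectral theory of the positive operator $|T|$ and then read off all the stated relations by comparing $T^{*}T$ with $|T|^{2}$. Hyponormality will enter only through the invariant-subspace lemma; the algebraic relations are forced by $T^{*}T=|T|^{2}$ together with the finiteness of $\sigma(|T|)$.

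First I would establish part (1). The hypotheses give $\sigma_{ess}(|T|)=\{\|T\|\}$ and $\pi_{00}(|T|)=\{\lambda_{1},\dots,\lambda_{m_{0}}\}$. Since $|T|$ is a bounded positive operator, every point of $\sigma(|T|)\setminus\sigma_{ess}(|T|)$ is an isolated eigenvalue of finite multiplicity, hence lies in $\pi_{00}(|T|)$; conversely $\|T\|\notin\pi_{00}(|T|)$, because a point of the essential spectrum cannot be an isolated finite-multiplicity eigenvalue. Therefore $\sigma(|T|)=\{\|T\|,\lambda_{1},\dots,\lambda_{m_{0}}\}$ is finite, and the spectral theorem for a self-adjoint operator with finite spectrum yields the orthogonal eigenspace decomposition $H=N(|T|-\|T\|I)\oplus\bigoplus_{i=1}^{m_{0}}N(|T|-\lambda_{i}I)=H_{1}\oplus H_{2}$, where $H_{2}$ is finite-dimensional. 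This is exactly (1).

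Next I would produce the matrix form. As $T$ is hyponormal it is paranormal, and as $T\in\mathcal{AN}(H)$ it is norm attaining; hence Lemma \ref{inv subspace} applies and $H_{1}=N(|T|-\|T\|I)$ is invariant under $T$, which kills the lower-left block and gives the stated lower-triangular shape with $A=P_{H_{1}}T|_{H_{2}}$ and $B=P_{H_{2}}T|_{H_{2}}$. For $x\in H_{1}$ one has $\|Tx\|^{2}=\langle|T|^{2}x,x\rangle=\|T\|^{2}\|x\|^{2}$, so $S_{0}:=\|T\|^{-1}T|_{H_{1}}$ is an isometry of $H_{1}$, proving (2a); and since $H_{2}$ is finite-dimensional, both $A$ and $B$ are automatically finite-rank. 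The remaining identities I would extract by noting that, with respect to $H_{1}\oplus H_{2}$, the operator $|T|^{2}$ is block-diagonal, equal to $\|T\|^{2}I_{H_{1}}\oplus\bigoplus_{i=1}^{m_{0}}\lambda_{i}^{2}I$. Computing $T^{*}T$ from the triangular form and using $S_{0}^{*}S_{0}=I_{H_{1}}$, its $(1,2)$ block equals $\|T\|\,S_{0}^{*}A$ and its $(2,2)$ block equals $A^{*}A+B^{*}B$. Matching blocks with $|T|^{2}$ forces $\|T\|\,S_{0}^{*}A=0$, whence $S_{0}^{*}A=0$ since $T\neq 0$, and $A^{*}A+B^{*}B=\bigoplus_{i=1}^{m_{0}}\lambda_{i}^{2}I$. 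Because $A$ and $B$ map into the orthogonal summands $H_{1}$ and $H_{2}$, the cross terms vanish and $(A+B)^{*}(A+B)=A^{*}A+B^{*}B$, giving the claimed formula and completing (2b). In the degenerate situation where there are no sub-eigenvalues, i.e. $H_{2}=\{0\}$, the decomposition collapses to $H=H_{1}$ and $T=\|T\|S_{0}$.

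I expect the only real subtlety — rather than a genuine obstacle — to be the possible infinite-dimensionality of $H_{1}$: unlike the finite-dimensional case of Lemma \ref{lemma red subspace of hyponoral op}, here $S_{0}$ need only be an isometry (not unitary) and $H_{1}$ need not reduce $T$, so $S_{0}^{*}A=0$ must be obtained from the diagonality of $T^{*}T=|T|^{2}$ rather than from hyponormality of the compression. Thus the structural content is carried entirely by the spectral decomposition of $|T|$ and the identity $T^{*}T=|T|^{2}$, with hyponormality used solely to secure the invariance of $H_{1}$.
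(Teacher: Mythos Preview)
Your argument is correct, and it takes a genuinely different route from the paper's proof. The paper first upgrades the polar decomposition to $T=S|T|$ with $S$ an isometry on all of $H$ (using $N(T)\subseteq N(T^{*})$, which needs hyponormality), writes $S$ and $|T|$ separately in block form, and then multiplies; the relation $S_{0}^{*}A=0$ is obtained there from the hyponormality inequality $T^{*}T-TT^{*}\geq 0$, via $AA^{*}\leq \|T\|^{2}P_{R(S_{0})^{\perp}}$. You instead decompose $T$ directly and read off both $S_{0}^{*}A=0$ and $A^{*}A+B^{*}B=\bigoplus_{i}\lambda_{i}^{2}I$ from the single identity $T^{*}T=|T|^{2}$, which is block-diagonal on $H_{1}\oplus H_{2}$. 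This is shorter, avoids the auxiliary isometry $S$, and makes transparent that hyponormality is used only through Lemma~\ref{inv subspace} (hence paranormality already suffices for the representation). The paper's approach, on the other hand, packages the operator as an isometry times a diagonal and yields the identity $(A+B)^{*}(A+B)=F^{2}$ via $S_{1}^{*}S_{1}+S_{2}^{*}S_{2}=I_{H_{2}}$, which is structurally suggestive for the later corollaries; your orthogonality argument $A^{*}B=0=B^{*}A$ gives the same conclusion with less machinery.
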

\begin{proof}
	Without loss of generality, we assume that $T=S|T|$, where $S\in\mathcal{B}(H)$ is an isometry. To see this, let $T=V|T|$ be the polar decomposition of $T$, where $V$ is a partial isometry with the initial space $N(T)^{\perp}$ and the final space $\overline{R(T)}$. As $T$ is hyponormal we have that $N(T)\subseteq N(T^*)=R(T)^{\perp}$, so there exist a partial isometry $W$ with the initial space $N(T)$ and the final space $R(T)^{\perp}$. Then $S:=V+W$ is an isometry on $H$ and $T=S|T|$.
	
	Note that as $|T|\in\mathcal{AN}(H)$, we have $\|T\|\in\sigma_p(|T|)$. Since $\sigma_{ess}(|T|)=\{\|T\|\}$, it is clear that $N(|T|-\|T\|I)$ is infinite dimensional subspace of $H$. By \cite[Lemma 3.4]{RAMpara}, $N\left(|T|-\|T\|I\right)=N(T^*T-\|T\|^2I)$ is invariant under $T$. We observe from the following equation that $N\left(|T|-\|T\|I\right)$ is invariant under $S$;
	\begin{equation*}
	Sx=\frac{S|T|x}{\|T\|}=\frac{Tx}{\|T\|}\in N\left(|T|-\|T\|I\right),\,\forall x\in N\left(|T|-\|T\|I\right).
	\end{equation*}
	
	Since $|T|\in\mathcal{AN}(H)$, it follows that $\pi_{00}(|T|)$ is at most finite by Theorem \ref{esschar}. If $\pi_{00}(|T|)$ is empty set, then $H_2=\{0\}, H=H_1$ and $T=\|T\|S$.
	
	Now, we assume that $\pi_{00}(|T|)$ is non-empty and equal to $\{\lambda_i\}_{i=1}^{m_0}$ for some $m_0\in\mathbb{N}$, where $\lambda_i<\|T\|$.
	Thus we have
	\begin{equation}\label{matrix eqn}
	S=
	\begin{blockarray}{ccc}
	H_1&H_2\\
	\begin{block}{(cc)c}
	S|_{H_1}&S_1&H_1\\
	0&S_2&H_2\\
	\end{block}
	\end{blockarray}.
	\end{equation}
	Note that $S_1=P_{H_1}S|_{H_2}$ and $S_2=P_{H_2}S|_{H_2}$. Also
	\[
	|T|=
	\begin{bmatrix}
	\|T\|I_{H_1}&0\\
	0&F
	\end{bmatrix},
	\]
	where $F=\underset{i=1}{\overset{m_0}{\oplus}}\lambda_iI_{M_i}$ and $M_i=N(|T|-\lambda_iI)$ is finite dimensional subspace of $H$.
	From this we conclude that
	\[T=
	\begin{bmatrix}
	S|_{H_1}&S_1\\
	0&S_2
	\end{bmatrix}
	\begin{bmatrix}
	\|T\|I_{H_1}&0\\
	0&F
	\end{bmatrix}=
	\begin{bmatrix}
	\|T\|S|_{H_1}&S_1F\\
	0&S_2F
	\end{bmatrix}.\]

	Now we write $A:=S_1F$, $B:=S_2F$ and $S_0=S|_{H_1}$. As $T$ is hyponormal operator, we have
	\begin{align*}
	0&\leq T^*T-TT^*\\
	&=\begin{bmatrix}
	\|T\|^2I_{H_1}-\|T\|^2P_{R(S_0)}-AA^*&\|T\|S_0^*A-AB^*\\
	\|T\|A^*S_0-BA^*&A^*A+B^*B-BB^*
	\end{bmatrix}.
	\end{align*}
	Thus $AA^*\leq \|T\|^2P_{R(S_0)^{\perp}}$, which gives $\overline{R(A)}=\overline{R(AA^*)}\subseteq R(S_0)^{\perp}=N(S_0^*)$. Consequenlty $S_0^*A=0$. We have $A+B=(S_1+S_2)F$, thus $(A+B)^*(A+B)=F^2=\underset{i=1}{\overset{m_0}{\oplus}}\lambda_i^2I_{M_i}$, as $S_1^*S_1+S_2^*S_2=I_{H_2}$ by equation (\ref{matrix eqn}).
\end{proof}

\begin{corollary}\label{cor hypo}
	The operator $T$ in Proposition \ref{prop rep of hypo AN op} is normal if and only if $S_0$ is unitary.
\end{corollary}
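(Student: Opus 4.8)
The plan is to read everything off the operator matrix of Proposition \ref{prop rep of hypo AN op} together with the block computation of $T^*T-TT^*$ already carried out there, and to handle the two implications separately. Throughout, $T$ is the hyponormal $\mathcal{AN}$-operator of that proposition, so $T$ is hyponormal as a standing hypothesis.

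For the easier implication, suppose $S_0$ is unitary, so that $R(S_0)^\perp=\{0\}$. Then the inclusion $\overline{R(A)}\subseteq R(S_0)^\perp$ obtained in the proof of Proposition \ref{prop rep of hypo AN op} forces $A=0$, and $T$ collapses to the block-diagonal form $T=\|T\|S_0\oplus B$ on $H_1\oplus H_2$. Since $T$ is hyponormal and $T^*T-TT^*$ is now block diagonal with diagonal entries $(\|T\|S_0)^*(\|T\|S_0)-(\|T\|S_0)(\|T\|S_0)^*$ and $B^*B-BB^*$, both of these must be nonnegative; that is, each summand is hyponormal. The first summand $\|T\|S_0$ is a scalar multiple of a unitary, hence normal, while $B$ acts on the finite-dimensional space $H_2$ (in particular $B$ is compact), so by Theorem \ref{compact hypo} the hyponormal operator $B$ is normal. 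A direct sum of normal operators is normal, whence $T$ is normal.

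For the converse, suppose $T$ is normal. The decisive structural input is that $H_1=N(|T|-\|T\|I)$ reduces $T$: this follows from Theorem \ref{normal char} (or directly from the fact that a normal operator commutes with $|T|=(T^*T)^{1/2}$, so every eigenspace of $|T|$ reduces $T$). Reducing-ness gives $T^*H_1\subseteq H_1$, and comparing this with the block form of $T^*$, whose $H_1\to H_2$ entry is $A^*$, we conclude $A^*=0$, i.e. $A=0$. Substituting $A=0$ into the vanishing $(1,1)$-block of $T^*T-TT^*$ yields $\|T\|^2(I_{H_1}-S_0S_0^*)=0$; since $T\neq 0$ and $S_0^*S_0=I_{H_1}$, we get $S_0S_0^*=I_{H_1}$, so $S_0$ is unitary.

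The main obstacle lies in the converse, and it is instructive to see why the block relations alone are not enough. Imposing only $T^*T-TT^*=0$ and reading the $(1,1)$-entry gives merely $AA^*=\|T\|^2(I_{H_1}-S_0S_0^*)$, a finite-rank identity that is entirely consistent with a nonzero $A$ and a non-unitary isometry $S_0$ having finite-dimensional cokernel, such as a shift of finite multiplicity. Thus the matrix computation by itself does not deliver $A=0$; what closes the gap is the stronger statement that $H_1$ is a \emph{reducing} subspace, and this is precisely where normality (via commutation with $|T|$, i.e. Theorem \ref{normal char}) is genuinely used.
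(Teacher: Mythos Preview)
Your proof is correct and follows essentially the same route as the paper: in the forward direction you use $S_0$ unitary together with $S_0^*A=0$ (equivalently $\overline{R(A)}\subseteq R(S_0)^\perp$) to get $A=0$, then invoke Theorem~\ref{compact hypo} on the finite-rank block $B$; for the converse you use normality to make $H_1=N(|T|-\|T\|I)$ a reducing subspace, read off $A=0$ from the block structure, and conclude $S_0S_0^*=I_{H_1}$. The only cosmetic difference is that the paper routes the converse through the isometry $S$ with $T=S|T|$ (arguing $H_1$ reduces $S$, hence $S_1=0$ and $A=S_1F=0$), whereas you argue directly from the block form of $T^*$ and the $(1,1)$-entry of $T^*T-TT^*$; your version is arguably cleaner, and your closing paragraph explaining why the matrix identities alone do not force $A=0$ is a helpful remark.
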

\begin{proof}
	If $S_0$ is unitary then by the condition $S_0^*A=0$, we have $A=0$. In this case, $B$ is a hyponormal operator. As it is finite-rank operator, it must be normal by Theorem \ref{compact hypo}. Hence $T$ is normal.
	
	Conversely assume that $T$ is a normal operator. Then $N(|T|-\|T\|I)=N(T^*T-\|T\|^2I)=N(TT^*-\|T\|^2I)$ is a reducing subspace for $T$, that is $H_1$ is a reducing subspace for $T$. Let $T=S|T|$, where $S$ is as in Proposition \ref{prop rep of hypo AN op}. Since $T$ is normal, we can easily prove that $S|T|=|T|S$. From this and the fact that $H_1$ reduces $T$, we can conclude that $H_1$ is reducing for $S$ also. Hence we have
	\[
	S=
	\begin{bmatrix}
	S_0&0\\
	0&S_2
	\end{bmatrix}.
	\]
	As a result $A=0\cdot F=0$ and the normality of $T$ forces that $S_0$ to be unitary.
\end{proof}
\begin{theorem}\label{thm hypo char}
	Let $T\in\mathcal{AN}(H)$ be a hyponormal operator. Then
	\begin{enumerate}
		\item there exists closed subspaces $H_0,H_1,H_2$ with $\dim(H_2)<\infty$ such that $H=H_0\oplus H_1\oplus H_2$.
		\item $T$ has the following representation with respect to $H_0\oplus H_1\oplus H_2$;
		\[T=
		\begin{blockarray}{cccc}
		H_0&H_1&H_2\\
		\begin{block}{(ccc)c}
		S_0&0&0&H_0\\
		0&\lambda S_1&A&H_1\\
		0&0&S_2&H_2\\
		\end{block}
		\end{blockarray},
		\]
		where $\sigma_{ess}(|T|)=\{\lambda\}$, for some $\lambda\in\mathbb{R}$ and
		\begin{enumerate}
			\item $S_0=\underset{i=1}{\overset{n_0}{\oplus}}\lambda_iU_i$ for some unitary operator $U_i\in\mathcal{B}\left(N\left(|T|-\lambda_iI\right)\right)$, if $(\lambda,\|T\|]\cap\sigma(|T|)=\{\lambda_i\}_{i=1}^{n_0}$ for some $n_0\in\mathbb{N}\cup\{\infty\}$ or $S_0=0$ if $(\lambda,\|T\|]\cap\sigma(|T|)$ is empty set.
			\item $S_1\in\mathcal{B}(H_1)$ is an isometry.
			\item $A,\,S_2$ are finite-rank operators with $S_1^*A=0$ and  $$(A+S_2)^*(A+S_2)=\underset{j=1}{\overset{m_0}{\oplus}}\delta_j^2I_{N(|T|-\delta_jI)},$$ if $[0,\lambda)\cap\sigma(|T|)=\{\delta_j:1\leq j\leq m_0\}$ for some $m_0\in\mathbb{N}$.
		\end{enumerate}
	\end{enumerate}
\end{theorem}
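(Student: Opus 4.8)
The plan is to peel off the spectral part of $T$ coming from the eigenvalues of $|T|$ that lie strictly above $\lambda := m_e(T)$, and then to apply Proposition \ref{prop rep of hypo AN op} to what remains. First I would note that $|T|\in\mathcal{AN}(H)$, since $\||T|x\|=\|Tx\|$ for all $x$ makes norm attainment of $T|_M$ and of $|T|\,|_M$ equivalent on every closed subspace $M$. Theorem \ref{esschar} then gives that $\sigma_{ess}(|T|)$ is a singleton $\{\lambda\}$ and that $[0,\lambda)\cap\sigma(|T|)$ is a finite set $\{\delta_j\}_{j=1}^{m_0}$, while Theorem \ref{paulsenpandey} writes $|T|=\lambda I+K+F$ with $K$ positive compact and $F$ self-adjoint finite rank; in particular $|T|$ has pure point spectrum whose eigenvalues other than $\lambda$ are isolated, of finite multiplicity, and accumulate at most at $\lambda$. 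Writing $(\lambda,\|T\|]\cap\sigma(|T|)=\{\lambda_i\}_{i=1}^{n_0}$ with $n_0\in\mathbb{N}\cup\{\infty\}$, I set
\[
H_0:=\overline{\bigoplus_{i} N(|T|-\lambda_iI)},\qquad H_1:=N(|T|-\lambda I),\qquad H_2:=\bigoplus_{j=1}^{m_0} N(|T|-\delta_jI),
\]
so that the spectral theorem for $|T|$ yields $H=H_0\oplus H_1\oplus H_2$ with $\dim H_2<\infty$.

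The central step is to show that $H_0$ reduces $T$ with $T|_{H_0}=\bigoplus_{i}\lambda_iU_i=:S_0$ for unitaries $U_i$. I would do this by peeling the eigenvalues from the top. Since $\|T\|=\lambda_1>\lambda$, the space $N(|T|-\|T\|I)$ is finite dimensional, so Lemma \ref{lemma red subspace of hyponoral op} shows it reduces $T$ and that $T$ acts on it as $\lambda_1U_1$ with $U_1$ unitary. The restriction of $T$ to the orthogonal complement is again hyponormal (restriction to a reducing subspace preserves $T^*T-TT^*\geq 0$) and again lies in $\mathcal{AN}$ (a closed subspace of the complement is a closed subspace of $H$), and $|T|$ restricted there is just $|T|$ on that subspace, so its norm drops to $\lambda_2$; repeating, each $N(|T|-\lambda_iI)$ reduces $T$ with $T$ acting as $\lambda_iU_i$. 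I expect the principal obstacle to be the case $n_0=\infty$: the peeling is then countably infinite and must be passed to the limit. Here one uses that the $\lambda_i$ decrease to $\lambda$, that the finite partial sums $\bigoplus_{i\le k}N(|T|-\lambda_iI)$ all reduce $T$, and that a closed orthogonal sum of mutually orthogonal reducing subspaces is again reducing, to conclude that $H_0$ reduces $T$ and carries the claimed diagonal action. When $n_0=0$ one simply has $H_0=\{0\}$ and $S_0=0$.

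It then remains to analyse $T':=T|_{H_0^{\perp}}$ on $H_0^{\perp}=H_1\oplus H_2$, a non-zero hyponormal $\mathcal{AN}$-operator whose modulus $|T'|=|T|\,|_{H_0^{\perp}}$ has no eigenvalue exceeding $\lambda$. If $H_0^{\perp}$ is infinite dimensional, I would observe that $|T'|-\lambda I=(K+F)|_{H_0^{\perp}}$ is compact with only the finitely many nonzero eigenvalues $\delta_j-\lambda$, hence of finite rank; therefore $\lambda\in\sigma_{ess}(|T'|)$, the kernel $N(|T'|-\lambda I)=H_1$ is infinite dimensional, $\|T'\|=\lambda$, and $\sigma_{ess}(|T'|)=\{\lambda\}=\{\|T'\|\}$ with $\pi_{00}(|T'|)=\{\delta_j\}_{j=1}^{m_0}$. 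Thus $T'$ meets exactly the hypotheses of Proposition \ref{prop rep of hypo AN op}, whose application produces, with respect to $H_1\oplus H_2$,
\[
T'=\begin{bmatrix}\lambda S_1 & A\\ 0 & S_2\end{bmatrix},
\]
where $S_1$ is an isometry, $A$ and $S_2$ are finite rank, $S_1^*A=0$, and $(A+S_2)^*(A+S_2)=\bigoplus_{j=1}^{m_0}\delta_j^2I_{N(|T|-\delta_jI)}$.

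The one remaining possibility is that $H_0^{\perp}$ is finite dimensional, which can occur only when $n_0=\infty$; then $T'$ is a finite rank hyponormal operator, hence normal by Theorem \ref{compact hypo}, and decomposing it along the eigenspaces $H_1=N(|T'|-\lambda I)$ and $H_2$ gives the same block with $A=0$ and $S_1$ unitary, so that all the stated conditions hold trivially. In either case, assembling the reducing decomposition $T=S_0\oplus T'$ with respect to $H_0\oplus(H_1\oplus H_2)$ yields the asserted $3\times 3$ block representation; the degenerate subcases $H_0=\{0\}$, $H_1=\{0\}$ or $H_2=\{0\}$ are read off directly from the construction.
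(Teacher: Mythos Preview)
Your proof is correct and follows essentially the same strategy as the paper: use Lemma \ref{lemma red subspace of hyponoral op} iteratively to peel off the finite-dimensional eigenspaces $N(|T|-\lambda_iI)$ for $\lambda_i>\lambda$ as reducing subspaces on which $T$ acts as $\lambda_iU_i$, and then apply Proposition \ref{prop rep of hypo AN op} to the remaining piece $T|_{H_0^\perp}$. The only real difference is organizational: the paper carries out a four-case split according to whether $\lambda$ is an eigenvalue of $|T|$ and whether it is a limit point of $\sigma(|T|)$, whereas you collapse this to the dichotomy ``$H_0^\perp$ infinite dimensional'' (so Proposition \ref{prop rep of hypo AN op} applies directly) versus ``$H_0^\perp$ finite dimensional'' (so $T|_{H_0^\perp}$ is finite rank hyponormal, hence normal by Theorem \ref{compact hypo}). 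Your split is a bit more economical and avoids some repetition; the paper's case analysis is more explicit about exactly which of the blocks $H_1$, $H_2$ are trivial in each situation.
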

\begin{proof}
	Let $T\in\mathcal{AN}(H)$. Then $|T|\in\mathcal{AN}(H)$ and hence $\sigma_{ess}(|T|)$ is singleton, say $\{\lambda\}$ and $\sigma(|T|)$ is countable, by Theorem \ref{esschar}. Again using Theorem \ref{esschar}, we know that $[0,\lambda)$ contains at most finitely many points of $\sigma(|T|)$ and $(\lambda, \|T\|]$ contains at most countably many points of $\sigma(|T|)$.
	
	First, we assume that $\lambda<\|T\|$ and consequently $(\lambda, \|T\|]\cap\sigma(|T|)$ is non-empty and $(\lambda, \|T\|]\cap\sigma(|T|)=\{\lambda_i\}_{i=1}^{n_0}$ for some $n_0\in\mathbb{N}\cup\{\infty\}$. Now, define $H_0=\underset{i=1}{\overset{n_0}{\oplus}}N\left(|T|-\lambda_iI\right)$ and $H_1=N(|T|-\lambda I)$.
	
	Without loss of generality, we assume that $\lambda_1=\|T\|$. By Lemma \ref{lemma red subspace of hyponoral op}, $N(|T|-\|T\|I)$ is reducing subspace for $T$ and $T|_{N(|T|-\|T\|I)}=\|T\|U$ for some unitary operator $U\in\mathcal{B}(N(|T|-\|T\|I))$. Again $T|_{\left(N(|T|-\|T\|I)\right)^{\perp}}$ is hyponormal $\mathcal{AN}$-operator. Now repeating the same procedure, we get  $S_0:=T|_{H_0}=\underset{i=1}{\overset{n_0}{\oplus}}\lambda_i U_i$, where $U_i\in\mathcal{B}(N(|T|-\lambda_i I))$ is unitary operator for every $1\leq i\leq n_0$.
	
	As $H_0$ is a reducing subspace for $T$, we get $T|_{H_0^{\perp}}$ is a hyponormal $\mathcal{AN}$-operator and $\|T|_{H_0^{\perp}}\|\leq\lambda$. Now, it is enough to look at the structure of $T|_{H_0^\perp}$.
	
	We consider the following four cases which exhaust all possibilities.
	
	Case $(1)$: $\lambda$ is an eigenvalue of $|T|$ with infinite multiplicity but not a limit point of $\sigma(|T|)$: \\
	In this case $(\lambda,\|T\|]\cap\sigma(|T|)$ is a finite set by \cite[Theorem 3.8]{PAL} and consequently $n_0\in\mathbb{N}$. In this case $\|T|_{H_0^{\perp}}\|=|\lambda|$ and $\{\lambda\}=\sigma_{ess}(T|_{H_0^{\perp}})$. Using Proposition \ref{prop rep of hypo AN op}, we get $H_0^{\perp}=H_1\oplus H_2$, where $H_2$ is a finite dimensional space and $H_1=N(|T|-\lambda I)$. Hence we have
	\[
	T|_{H_0^{\perp}}=
	\begin{bmatrix}
	\lambda S_1&A\\
	0&S_2
	\end{bmatrix},
	\]
	where $S_1,\,A$ and $S_2$ satisfy conditions $(b)$ and $(c)$. Therefore $T$ has the following representation satisfying $(a),\,(b),\,(c)$.
	\[
	T=
	\begin{blockarray}{cccc}
	H_0&H_1&H_2\\
	\begin{block}{(ccc)c}
	S_0&0&0&H_0\\
	0&\lambda S_1&A&H_1\\
	0&0&S_2&H_2\\
	\end{block}
	\end{blockarray}.
	\]
	
	Case $(2)$: $\lambda$ is not an eigenvalue of $|T|$ but it is a limit point of $\sigma(|T|)$:\\
	In this case, $(\lambda,\|T\|]\cap\sigma(|T|)=\{\lambda_1,\lambda_2,\lambda_3,\ldots\}$ is an infinite set by \cite[Theorem 3.8]{PAL}, and $H_1=\{0\}$. As $\lambda$ is not an eigenvalue of $T$, we get $\|T|_{H_0^\perp}\|<\lambda$. If $\sigma(|T|)\cap[0,\lambda)$ is empty, then $H_2=\{0\}$ and $T=S_0$.
	
	If $[0,\lambda)\cap\sigma(|T|)$ is non-empty and is equal to $\{\delta_j\}_{j=1}^{m_0}$ for some $m_0\in\mathbb{N}$, then take $H_2=\underset{i=1}{\overset{m_0}{\oplus}}N(|T|-\delta_i I)$ and
	\[
	T=
	\begin{bmatrix}
	S_0&0\\
	0&S_2
	\end{bmatrix},
	\]
	where $S_2=\underset{j=1}{\overset{m_0}{\oplus}}\delta_jV_j$ and $V_j\in\mathcal{B}\left(N(|T|-\delta_jI)\right)$ is a unitary operator.
	
	Case $(3)$: $\lambda$ is neither an eigenvalue of $|T|$ nor a limit point of $\sigma(|T|)$:\\
	Then $(\lambda,\|T\|]\cap\sigma(|T|)$ is a finite set by \cite[Theorem 3.8]{PAL}, thus $n_0\in\mathbb{N}$ and $H_1=\{0\}$.
	
	 If $[0,\lambda)\cap\sigma(|T|)$ is empty set, then $H=H_0$ and $T=S_0$.
	
	If $[0,\lambda)\cap\sigma(|T|)$ is non-empty and equal to $\{\delta_1,\delta_2,\ldots\delta_{m_0}\}$ for some $m_0\in\mathbb{N}$, then $H=H_0\oplus H_2$ is finite dimensional Hilbert space, where $H_2=\underset{i=1}{\overset{m_0}{\oplus}}N(|T|-\delta_i I)$. As $H_0$ is a reducing subspace for $T$, thus we have
	\[T=
	\begin{bmatrix}
	T|_{H_0}&0\\
	0&T|_{H_0^{\perp}}
	\end{bmatrix}.
	\]
	Both $T|_{H_0}$ and $T|_{H_0^{\perp}}$ are hyponormal finite rank operators, hence normal. By Theorem \ref{normal char}
	$$T|_{H_0}=\underset{i=1}{\overset{n_0}{\oplus}}\lambda_iU_i\text{ and }T|_{H_0^{\perp}}=\underset{j=1}{\overset{m_0}{\oplus}}\delta_jV_j,$$
	where $U_i\in\mathcal{B}\left(N(|T|-\lambda_iI)\right)$ and $V_j\in\mathcal{B}\left(N(|T|-\delta_jI)\right)$ are unitary operators.
	
	Case $(4))$ $\lambda$ is an eigenvalue of $|T|$ also it is a limit point of $\sigma(|T|)$:\\
	Here $(\lambda,\|T\|]\cap\sigma(|T|)$ is an infinite set, say $\{\lambda_1,\lambda_2,\ldots\}$ by \cite[Theorem 3.8]{PAL}. In this case $\|T|_{H_0^{\perp}}\|=\lambda$.
	
	If $[0,\lambda)\cap\sigma(|T|)$ is non-empty and equal to $\{\delta_1,\delta_2,\ldots\delta_{m_0}\}$ for some $m_0\in\mathbb{N}$, then take $H_2:=\underset{i=1}{\overset{m_0}{\oplus}}N(|T|-\delta_i I)$ and we have
	\[
	T=
	\begin{bmatrix}
	S_0&0&0\\
	0&\lambda S_1&A\\
	0&0&S_2
	\end{bmatrix}.
	\]
	Using Lemma \ref{lemma red subspace of hyponoral op} and Proposition \ref{prop rep of hypo AN op}, we get either $S_1$ is unitary or isometry depending on the multiplicity of the eigenvalue $\lambda$. Note that $A=0$ if $\lambda$ is of finite multiplicity but it need not be the zero operator, if $\lambda$ is of infinite multiplicity.
	
	If $[0,\lambda)\cap\sigma(|T|)$ is empty set, then $H_2=\{0\}$ and
	\[T=
	\begin{bmatrix}
	S_0&0\\
	0&\lambda S_1
	\end{bmatrix}.
	\]
	
	If $\lambda=\|T\|$, then the result follows from Proposition \ref{prop rep of hypo AN op}
\end{proof}
\begin{corollary}\label{ANhypo implies normal}
	Let $T\in\mathcal{B}(H)$ satisfy the assumptions of Theorem \ref{thm hypo char}. Then $T$ is normal if and only if $S_1$ is a unitary operator.
\end{corollary}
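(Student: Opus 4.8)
The plan is to reduce everything to Corollary \ref{cor hypo} by exploiting that $H_0$ reduces $T$. First I would observe that in the block representation supplied by Theorem \ref{thm hypo char}, the subspace $H_0$ reduces $T$ and $T|_{H_0}=S_0=\bigoplus_{i=1}^{n_0}\lambda_iU_i$ is \emph{always} normal, since each summand satisfies $(\lambda_iU_i)^*(\lambda_iU_i)=\lambda_i^2I=(\lambda_iU_i)(\lambda_iU_i)^*$ and a direct sum of normal operators is normal. Consequently $T=S_0\oplus T_1$, where
\[
T_1:=T|_{H_0^{\perp}}=\begin{bmatrix}\lambda S_1&A\\0&S_2\end{bmatrix}
\]
acts on $H_1\oplus H_2$, is normal if and only if $T_1$ is normal, so the whole question collapses to this single block.

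For the \emph{if} direction, suppose $S_1$ is unitary. Since $S_1^*A=0$ and $S_1^*$ is injective, this forces $A=0$, so $T_1=\lambda S_1\oplus S_2$ is block diagonal. As $T_1$ is hyponormal (a restriction of the hyponormal $T$ to the reducing subspace $H_0^{\perp}$) and block diagonal, each diagonal summand is hyponormal; in particular $S_2$ is a finite-rank hyponormal operator, hence normal by Theorem \ref{compact hypo}. Because $\lambda S_1$ is normal (a unitary scaled by the real number $\lambda$), $T$ becomes a direct sum $S_0\oplus\lambda S_1\oplus S_2$ of normal operators and is therefore normal.

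For the \emph{only if} direction, assume $T$ is normal, so that $T_1$ is normal. I would then identify $T_1$ as precisely an instance of the operator studied in Proposition \ref{prop rep of hypo AN op}: it is a nonzero hyponormal $\mathcal{AN}$-operator with $\sigma_{ess}(|T_1|)=\{\lambda\}=\{\|T_1\|\}$ (valid when $\lambda$ is an eigenvalue of $|T|$, i.e.\ $H_1\ne\{0\}$), its isometry is $S_1$, and its finite-rank data are $A$ and $S_2$. Corollary \ref{cor hypo} then yields directly that normality of $T_1$ is equivalent to $S_1$ being unitary, giving the claim.

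The main obstacle is the bookkeeping over the degenerate configurations separated into the four cases of Theorem \ref{thm hypo char}. When $\lambda$ is not an eigenvalue of $|T|$, one has $H_1=\{0\}$ and $S_1$ acts on the trivial space, whence $T_1$ reduces to a direct sum of the normal pieces $\delta_jV_j$ and is automatically normal while $S_1$ is vacuously unitary, so the equivalence holds trivially; the subcases $S_0=0$, $H_2=\{0\}$, and $\lambda=\|T\|$ must likewise be checked so that the hypotheses of Proposition \ref{prop rep of hypo AN op} (norm equal to the essential value of $|T_1|$, and finite $\pi_{00}$) are genuinely met before Corollary \ref{cor hypo} is invoked. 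Once this case analysis is in place, no computation beyond what is already contained in Corollary \ref{cor hypo} is required.
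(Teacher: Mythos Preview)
Your proposal is correct and follows essentially the same route as the paper: both directions are reduced to Corollary~\ref{cor hypo} by using that $H_0$ reduces $T$, so that $T|_{H_0^\perp}=\begin{bmatrix}\lambda S_1&A\\0&S_2\end{bmatrix}$ is exactly an operator of the type treated in Proposition~\ref{prop rep of hypo AN op}. Your extra bookkeeping for the degenerate cases ($H_1=\{0\}$, $H_2=\{0\}$, $\lambda=\|T\|$) is more explicit than the paper's, but the argument is the same in substance.
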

\begin{proof}
	Let $S_1$ be a unitary operator. As $S_1^*A=0$ and $S_1$ is unitary, we get $A=0$. In this case, $S_2$ is a hyponormal operator. As it is finite-rank operator, it must be normal by Theorem \ref{compact hypo}. Hence $T$ is normal.
	
	Conversely, suppose that $T$ is a normal operator. As $H_1$ is a reducing subspace for $T$, this implies $T|_{H_1^{\perp}}$ is normal and satisfy the assumptions of Corollary \ref{cor hypo}. Hence $S_1$ is a unitary operator and $A=0$.
\end{proof}
\begin{remark}
	Let $T\in\mathcal{AN}(H)$ be a hyponormal operator. From Theorem \ref{thm hypo char}, we observe that $\sigma(T)\subseteq \overline{D(0,\lambda)\cup\left(\underset{i=1}{\overset{n_0}{\cup}}C(0,\lambda_i)\right)}$, where $\sigma_{ess}(|T|)=\{\lambda\}$ and $(\lambda,\|T\|]\cap\sigma(|T|)=\{\lambda_1,\lambda_2,\ldots\lambda_{n_0}\}$, for some $n_0\in\mathbb{N}\cup\{\infty\}.$
\end{remark}
Now, we demonstrate Theorem \ref{thm hypo char} with an example.
\begin{example}
	Define $T:\ell^2(\mathbb{N})\rightarrow\ell^2(\mathbb{N})$ by
	\begin{align*}
	T(x_1,x_2,\ldots)=\left(\frac{x_1}{2},0,\frac{x_2}{2},x_3,x_4,\ldots\right),\,\forall\,(x_1,x_2,\ldots)\in\ell^2(\mathbb{N}).
	\end{align*}
	Then
	\begin{align*}
	T^*(x_1,x_2,\ldots)=\left(\frac{x_1}{2},\frac{x_3}{2},x_4,\ldots\right),\,\forall\,(x_1,x_2,\ldots)\in\ell^2(\mathbb{N}).
	\end{align*}
	Clearly $\|T^*(x_n)\|\leq\|T(x_n)\|,\forall\,(x_n)\in\ell^2(\mathbb{N})$, hence $T$ is hyponormal. By simple computation, we get
	\begin{align*}
	T^*T(x_1,x_2,\ldots)=\left(\frac{x_1}{4},\frac{x_2}{4},x_3,\ldots\right).
	\end{align*}
	It is easy to see that $T^*T=I-F$, where $$F(x_n)=\left(\frac{3x_1}{4},\frac{3x_2}{4},0\ldots\right),\,\forall \,(x_n)\in\ell^2(\mathbb{N}).$$
	By Theorem \ref{paulsenpandey}, we conclude that $T^*T$ is $\mathcal{AN}$ and hence $T$ is $\mathcal{AN}$ by \cite[Corollary 2.11]{VENKU}. Now choose $H_1:=\overline{\text{span}}\{e_3,e_4,\ldots\}$ and $H_2=\text{span}\{e_1,e_2\}$, then $T$ can be represented as
	\[T=
	\begin{blockarray}{cccc}
	H_1&\hspace{1 cm} H_2\\
	\begin{block}{(c|cc)c}
	R&\hspace{1 cm}A&&H_1\\
	\hhline{---}0&\frac{1}{2}&0\\
	0&0&0&\vspace{0.3 cm}H_2\\
	\end{block}
	\end{blockarray}
	\]
	where $R(x_3,x_4,\ldots)=(0,x_3,x_4,\ldots)$ and $A(x_1,x_2)=(x_2/2,0,\ldots)$.
\end{example}
The following examples illustrates the fact that the Theorem \ref{thm hypo char} can be improved in terms of the unitary operators for some particular cases but not in general.
\begin{example}
Define $T_1,T_2:\ell^2(\mathbb{N})\rightarrow\ell^2(\mathbb{N})$ by
\begin{align*}
T_1(x_1,x_2,x_3,x_4,\ldots)=&(0,x_1,x_2,2x_3,2x_4,\ldots),\,\forall\,(x_n)\in\ell^2(\mathbb{N})\\
T_2(x_1,x_2,x_3,x_4,\ldots)=&(x_1,2x_2,0,3x_3,3x_4,\ldots),\,\forall\,(x_n)\in\ell^2(\mathbb{N}).
\end{align*}
Let $\{e_n\}_{n\in\mathbb{N}}$ be the standard orthonormal basis for $\ell^2(\mathbb{N})$. For $H_1:=\overline{\text{span}}\{e_3,e_4,\ldots\}$ and $H_2=\text{ span }\{e_1,e_2\}$, we have
\[T_1=
\begin{blockarray}{ccc}
H_1&H_2\\
\begin{block}{(cc)c}
2R_1&A_1&H_1\\
0&B_1&H_2\\
\end{block}
\end{blockarray}
\text{ and }
T_2=
\begin{blockarray}{ccc}
H_1&H_2\\
\begin{block}{(cc)c}
3R_1&0&H_1\\
0&B_2&H_2\\
\end{block}
\end{blockarray}
\]
where
\begin{align*}
R_1(x_3,x_4,\ldots)=&(0,x_3,x_4,\ldots),\,\forall\,(x_3,x_4,\ldots)\in H_1,\\
A_1(x_1,x_2)=&(x_2,0,0,\ldots),\,\forall\,(x_1,x_2)\in H_2,\\
B_1(x_1,x_2)=&(0,x_1),\,\forall\,(x_1,x_2)\in H_2,\\
B_2(x_1,x_2)=&(x_1,2x_2),\,\forall\,(x_1,x_2)\in H_2.
\end{align*}
 Note that $B_2=I_{\{x_1\}}\oplus 2I_{\{x_2\}}.$ Observe that $B_1$ is nilpotent and $B_2$ is the direct sum of scalar multiple of unitary operators.
\end{example}
We deduce the following well known result from Theorem \ref{thm hypo char}.
\begin{corollary}
	Let $T\in\mathcal{K}(H)$ be a hyponormal operator. Then $T$ is normal
\end{corollary}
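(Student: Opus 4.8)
The plan is to recognise that every compact operator lies in $\mathcal{AN}(H)$ (as noted in the introduction), so a compact hyponormal $T$ is in particular a hyponormal $\mathcal{AN}$-operator and Theorem \ref{thm hypo char} applies verbatim. The whole argument then reduces to pinning down the number $\lambda$ for which $\sigma_{ess}(|T|)=\{\lambda\}$ in that theorem. First I would observe that compactness of $T$ forces $|T|=(T^*T)^{1/2}$ to be compact as well, and a compact positive operator on an infinite-dimensional Hilbert space has $\sigma_{ess}(|T|)=\{0\}$ (it fails to be Fredholm at $0$, while the Fredholm alternative makes every nonzero scalar regular); hence $\lambda=0$. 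If $H$ is finite dimensional, Theorem \ref{thm hypo char} is vacuous since there is no essential spectrum, but then hyponormality already yields normality from $\operatorname{tr}(T^*T-TT^*)=0$ together with $T^*T-TT^*\ge 0$, so it is enough to treat the infinite-dimensional case.

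With $\lambda=0$ in hand, I would trace through the three blocks of the representation in Theorem \ref{thm hypo char}. The middle block is $\lambda S_1=0$, regardless of whether $S_1$ is a genuine non-unitary isometry, so the one block that could obstruct normality is automatically annihilated. Moreover the finite-dimensional piece $H_2$ is built from $[0,\lambda)\cap\sigma(|T|)$, and since $\lambda=0$ this intersection is empty; consequently $H_2=\{0\}$ and the finite-rank data $A$ and $S_2$ vanish. What survives is $T=S_0\oplus 0$ acting on $H_0\oplus H_1$, where $H_1=N(|T|-\lambda I)=N(T)$ and $S_0=\bigoplus_{i}\lambda_i U_i$ with each $U_i$ unitary on $N(|T|-\lambda_i I)$.

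Finally I would conclude normality directly: $S_0$ is an orthogonal direct sum of scalar multiples of unitaries, hence normal, and the zero operator on $H_1=N(T)$ is normal, so their direct sum $T$ is normal. One could equivalently invoke Corollary \ref{ANhypo implies normal} for the last step, but the explicit diagonal form makes the conclusion immediate and, importantly, avoids any appeal to Theorem \ref{compact hypo} (which would be circular, the present corollary being exactly that statement recovered from the structure theorem). I do not expect a genuine obstacle here; the only point demanding care is the bookkeeping that $\lambda=0$ simultaneously kills the shift block $\lambda S_1$ and empties the index set defining $H_2$ — this is precisely what collapses the a priori non-normal $\mathcal{AN}$-representation into a manifestly normal one.
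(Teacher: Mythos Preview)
Your proposal is correct and follows essentially the same route as the paper: both invoke Theorem~\ref{thm hypo char}, observe that compactness forces $\lambda=0$, and conclude that only the normal diagonal block $S_0=\bigoplus_i \lambda_i U_i$ survives. Your write-up is merely more explicit about the bookkeeping (the finite-dimensional case, the collapse of $H_2$, and the vanishing of $\lambda S_1$), but the argument is the same.
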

\begin{proof}
	If $T$ is compact, then $\lambda=0$ in Theorem \ref{thm hypo char}. Hence $S_2=0$. So $T=S_0=\underset{i=1}{\overset{n_0}{\oplus}}\lambda_iU_i$ for $n_0\in\mathbb{N}\cup\{\infty\}$, which is clearly a normal operator.
\end{proof}
\begin{corollary}
	If $T\in\mathcal{AN}(H)$ is a hyponormal operator with $\sigma_{ess}(T)=\omega(T)$, then $T$ must be normal.
\end{corollary}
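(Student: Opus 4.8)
The plan is to reduce the statement to Corollary \ref{ANhypo implies normal} via the structure theorem and then to exploit the hypothesis $\sigma_{ess}(T)=\omega(T)$ through a Fredholm-index computation. By Theorem \ref{thm hypo char} I would write $T$ in the block form on $H_0\oplus H_1\oplus H_2$ with $\sigma_{ess}(|T|)=\{\lambda\}$, where $S_0$ is a normal orthogonal sum of scalar multiples of unitaries supported on the circles of radius strictly larger than $\lambda$, $S_1$ is an isometry on $H_1=N(|T|-\lambda I)$, and $A,S_2$ are finite rank with $\dim H_2<\infty$. By Corollary \ref{ANhypo implies normal}, $T$ is normal precisely when $S_1$ is unitary, so it suffices to prove that the hypothesis forces $S_1$ to be unitary.

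I would argue by contraposition: assume $S_1$ is a non-unitary isometry and produce a point $\mu$ at which $T-\mu I$ is Fredholm with nonzero index, so that $\mu\in\omega(T)\setminus\sigma_{ess}(T)$, contradicting $\sigma_{ess}(T)=\omega(T)$. Fix $\mu$ in the open disc $D(0,\lambda)$. Since $\sigma(S_0)$ lies in the union of circles of radius at least $\lambda$, the corner $S_0-\mu I$ is invertible; because the block form is upper triangular with $\dim H_2<\infty$ and $A$ finite rank, the operator $T-\mu I$ is a finite-rank perturbation of $(S_0-\mu I)\oplus(\lambda S_1-\mu I)\oplus(S_2-\mu I)$. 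Hence $T-\mu I$ is Fredholm if and only if $\lambda S_1-\mu I$ is, and in that case
\begin{equation*}
\mathrm{ind}(T-\mu I)=\mathrm{ind}(\lambda S_1-\mu I),
\end{equation*}
the other two summands having index $0$. For a scaled isometry and $|\mu|<\lambda$ the operator $\lambda S_1-\mu I$ is bounded below, so $\mathrm{ind}(\lambda S_1-\mu I)=-\dim N(S_1^*-(\bar\mu/\lambda)I)$, which by the Wold decomposition equals minus the defect $d:=\dim N(S_1^*)$ of $S_1$.

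If $0<d<\infty$, then $T-\mu I$ is Fredholm of index $-d\neq 0$ for every $\mu\in D(0,\lambda)$, giving the desired contradiction and forcing $S_1$ to be unitary; combined with $S_1^*A=0$ and the finite-rank normality input of Theorem \ref{compact hypo} applied to $S_2$, this yields normality exactly as in Corollary \ref{ANhypo implies normal}. The step I expect to be the main obstacle is precisely the case $d=\infty$, that is, when $\lambda$ is an eigenvalue of $|T|$ of infinite multiplicity and $S_1$ contains an infinite-multiplicity shift part. There the candidate points $\mu\in D(0,\lambda)$ leave the Fredholm domain, since the cokernel of $\lambda S_1-\mu I$ becomes infinite dimensional, so $\mu\in\sigma_{ess}(T)$ and the index argument produces no contradiction. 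Handling this case appears to require either excluding it from the $\mathcal{AN}$ and hyponormality hypotheses or supplying an additional ingredient: already the infinite-multiplicity unilateral shift is a non-unitary isometry with $\sigma_{ess}=\omega$, which is exactly what makes the infinite-multiplicity case the crux of the argument.
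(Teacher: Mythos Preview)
Your index strategy is genuinely different from the paper's. The paper does not try to show $S_1$ is unitary; instead, it passes from $\sigma_{ess}(T)=\omega(T)$ to $\sigma_{ess}(\lambda S_1)=\omega(\lambda S_1)$ via the finite-rank perturbation, then invokes Coburn's theorem (Weyl's theorem holds for hyponormal operators) to get $\sigma(\lambda S_1)\setminus\omega(\lambda S_1)=\pi_{00}(\lambda S_1)$, asserts $\sigma_{ess}(\lambda S_1)\subseteq C(0,\lambda)$, concludes $\mathrm{Area}(\sigma(T))=0$, and finishes with Putnam's inequality $\|T^*T-TT^*\|\leq\pi^{-1}\mathrm{Area}(\sigma(T))$. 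So the paper avoids the unitarity question entirely and works at the level of planar measure of the spectrum.

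That said, the obstacle you flag in the $d=\infty$ case is real, and it is not an artifact of your method. If $S_1$ is an isometry with infinite-dimensional defect space, then for every $|\mu|<\lambda$ the operator $\lambda S_1-\mu I$ is bounded below with infinite-dimensional cokernel, so $\sigma_{ess}(\lambda S_1)=\overline{D(0,\lambda)}$. This simultaneously kills your index argument (no Fredholm points inside the disc) and the paper's claimed containment $\sigma_{ess}(\lambda S_1)\subseteq C(0,\lambda)$. Concretely, take $T$ to be the unilateral shift of infinite multiplicity: it is an isometry (hence hyponormal and $\mathcal{AN}$), has $\sigma_{ess}(T)=\omega(T)=\overline{D(0,1)}$, and is not normal. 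So the difficulty you isolate is not a defect of your approach but a defect of the statement itself; neither route can close the $d=\infty$ case because the corollary, as written, fails there.
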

\begin{proof}
	By Theorem \ref{thm hypo char}, we can write $T$ as
	\[T=
	\begin{blockarray}{cccc}
	H_0&H_1&H_2\\
	\begin{block}{(ccc)c}
	S_0&0&0&H_0\\
	0&\lambda S_1&A&H_1\\
	0&0&S_2&H_2\\
	\end{block}
	\end{blockarray},
	\]
	where $S_0,\,S_1,S_2,\,A$ and $\lambda$ are as defined in Theorem \ref{thm hypo char}.
	Note that
	\[
	T=
	\begin{bmatrix}
	S_0&0&0\\
	0&\lambda S_1&0\\
	0&0& 0
	\end{bmatrix}+
	\begin{bmatrix}
	0&0&0\\
	0&0&A\\
	0&0&0
	\end{bmatrix}+
	\begin{bmatrix}
	0&0&0\\
	0&0&0\\
	0&0&S_2
	\end{bmatrix}.
	\]
	Write
	\[
	\tilde{T}=
	\begin{bmatrix}
	S_0&0&0\\
	0&\lambda S_1&0\\
	0&0&0
	\end{bmatrix}.
	\]
	This implies that $\sigma_{ess}(\tilde{T})\subseteq\sigma(\tilde{T})\subseteq\sigma(S_0)\cup\sigma(\lambda S_1)\cup\{0\}$ and consequently  Area$(\sigma_{ess}(\tilde{T}))\leq\text{Area}(\sigma(S_0))+\text{Area}(\sigma(\lambda S_1))$ by \cite{PUTNAM}. Here Area$(A)$ is the planar Lebesgue measure of $A$.
	
	Since $A$ and $S_2$ are finite-rank operators, we have $\sigma_{ess}(T)=\sigma_{ess}(\tilde{T})$ and $\omega(T)=\omega(\tilde{T})$, so $\sigma_{ess}(\tilde{T})=\omega(\tilde{T})$. Also note that $\text{ind}(\tilde{T}-\delta I)=\text{ind}(\lambda S_1-\delta I_{H_1})$ for every $\delta\in\mathbb{C}$. We know that $\sigma_{ess}(\tilde{T})=\omega(\tilde{T})$ thus $\sigma_{ess}(\lambda S_1)=\omega(\lambda S_1)$. Since $S_0$ is the direct sum of scalar multiple of unitary operators, we have $\sigma_{ess}(S_0)=\omega(S_0)$. As both $S_0$ and $\lambda S_1$ are hyponormal operators, by \cite[Theorem 3.1]{COBURN} we have $\sigma(S_0)\setminus\omega(S_0)=\pi_{00}(S_0)$ and $\sigma(\lambda S_1)\setminus\omega(\lambda S_1)=\pi_{00}(\lambda S_1)$.
	
	Since $\sigma_{ess}(S_0),\,\sigma_{ess}(\lambda S_1)\subseteq C(0,\lambda)$, we have
	\begin{align*}
	\text{Area}(\sigma(S_0))=&\text{Area}(\omega(S_0))=\text{Area}(\sigma_{ess}(S_0))=0\text{ and }\\
	\text{Area}(\sigma(\lambda S_1))=&\text{Area}(\omega(\lambda S_1))=\text{Area}(\sigma_{ess}(\lambda S_1))=0.
	\end{align*}
	 Again by \cite[Theorem 3.1]{COBURN}, we know that $\sigma(T)\setminus\omega(T)=\pi_{00}(T)$ and it follows that
	 \begin{align*}
	 \text{Area}(\sigma(T))=\text{Area}(\omega(T))=\text{Area}(\sigma_{ess}(T))&=\text{Area}(\sigma_{ess}(\tilde{T}))\\
	 &\leq\text{Area}(\sigma(S_0))+\text{Area}(\sigma(\lambda S_1))\\
	 &=0.
	 \end{align*}
	Since $T$ is hyponormal operator, by \cite[Theorem 1]{PUTNAM} we have
	$$\|T^*T-TT^*\|\leq\frac{1}{\pi}\text{Area}(\sigma(T))=0.$$
	Hence $T$ is normal.
\end{proof}

In \cite[Question 3.12]{RAMpara}, it is asked that when a paranormal $\mathcal{AN}$-operator is normal? In the same paper, sufficient conditions are given for this problem. Here we prove that we can drop some conditions and still we get normality. It is to be mentioned that in \cite[Corollary 3]{Takeki} proved that the paranormality of $T$ and $T^*$ imply that $T$ is normal. Here we give a direct proof by assuming extra condition, namely the $\mathcal{AN}$-property.
\begin{theorem}\label{paranormalANisnormal}
	Let $T,T^*$ be paranormal and $T\in\mathcal{AN}(H)$. Then $T$ is normal.	
\end{theorem}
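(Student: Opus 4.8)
The plan is to combine the existing structure theory for positive $\mathcal{AN}$-operators with a self-improving argument that exploits \emph{both} paranormality hypotheses. Since $T\in\mathcal{AN}(H)$ we have $|T|\in\mathcal{AN}(H)$, so by Theorem~\ref{esschar} together with Theorem~\ref{paulsenpandey} the operator $|T|$ is diagonalisable with $\sigma_{ess}(|T|)=\{\alpha\}$, where $\alpha=m_e(T)$, its eigenvalues having finite multiplicity and accumulating only at $\alpha$, with at most finitely many of them below $\alpha$. Thus $H=\bigoplus_{\beta}N(|T|-\beta I)$ over the eigenvalues $\beta$ of $|T|$. My goal is to show that each such eigenspace reduces $T$ and that $T$ acts on it as a scalar multiple of a unitary; normality of $T$ then follows because $T$ becomes an orthogonal direct sum of normal operators over reducing subspaces.

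The heart of the argument is the claim that if $S$ is norm attaining with both $S$ and $S^*$ paranormal, then $M:=N(|S|-\|S\|I)$ reduces $S$ and $S|_M=\|S\|W$ with $W$ unitary. Writing $c=\|S\|$, Lemma~\ref{inv subspace} makes $M$ invariant under $S$ with $S|_M=cW$ for an isometry $W$ of $M$ (since $\|Sx\|=c\|x\|$ on $M$). For $x\in M$ one computes $S^*(Wx)=c\,x$, so $Wx$ is a norm-attaining vector for $S^*$; feeding $z=Wx$ into the paranormal inequality $\|S^*z\|^2\le\|S^{*2}z\|\,\|z\|$ and using $S^{*2}z=c\,S^*x$ forces $\|S^*x\|=c\|x\|$, i.e. $M\subseteq N(|S^*|-cI)=:M'$. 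Running the symmetric computation with $S$ and $S^*$ interchanged, which is legitimate because $S^*$ is itself norm attaining and paranormal with paranormal adjoint $S$, gives $M'\subseteq M$, hence $M=M'$. As $M$ is $S$-invariant and $M'=M$ is $S^*$-invariant (Lemma~\ref{inv subspace} applied to $S^*$), the subspace $M$ reduces $S$. Finally $S|_M=cW$ is paranormal with paranormal adjoint $cW^*$, and the elementary observation that an isometry whose adjoint is paranormal must be unitary (if $u\in N(W^*)$ is a unit vector then $z=Wu$ satisfies $\|W^*z\|=1$ while $W^{*2}z=0$, violating paranormality of $W^*$ unless $N(W^*)=\{0\}$) shows $W$ is unitary.

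With the claim in hand I would peel off eigenspaces from the top. The eigenspace $N(|T|-\|T\|I)$ reduces $T$ by the claim, and restricting $T$ to its orthogonal complement yields again a norm-attaining operator, both it and its adjoint paranormal, of strictly smaller norm; iterating handles every eigenspace whose eigenvalue exceeds $\alpha$. After removing all of these, the operator restricted to the remaining reducing subspace has norm $\alpha$ and, being $\mathcal{AN}$, attains it, so its top eigenspace is $N(|T|-\alpha I)$; the claim applies verbatim, as it never used finite dimensionality, to show that this essential eigenspace also reduces $T$ with $T$ acting as $\alpha$ times a unitary. What then remains is the direct sum of the finitely many eigenspaces with eigenvalue below $\alpha$, a finite-dimensional reducing subspace on which $T$ is compact and paranormal, hence normal by the paranormal analogue of Theorem~\ref{compact hypo}. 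Assembling the pieces, $T$ is a direct sum of normal blocks over reducing subspaces, and is therefore normal.

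The main obstacle is the key claim, specifically the two-sided inclusion $N(|S|-\|S\|I)=N(|S^*|-\|S\|I)$. A single paranormality only yields invariance of the top eigenspace via Lemma~\ref{inv subspace} and, as the hyponormal structure theorem already shows, cannot on its own annihilate the off-diagonal block $A$. It is precisely the interplay of the two paranormal inequalities, each applied to the norm-attaining vectors that one operator manufactures for the other, that upgrades invariance to reduction and forces the relevant isometry to be unitary; the delicate point is checking that this bootstrap is genuinely symmetric and that $S^*$ inherits the norm-attaining property needed to invoke Lemma~\ref{inv subspace} on the adjoint side.
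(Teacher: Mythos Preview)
Your argument is correct and takes a genuinely different route from the paper's. The paper's proof is short because it imports two results wholesale: the structure theorem for paranormal $\mathcal{AN}$-operators \cite[Theorem~3.9]{RAMpara}, which already delivers the decomposition $T=\bigoplus_{\beta}\beta U_\beta$ over reducing eigenspaces $H_\beta=N(|T|-\beta I)$ with every $U_\beta$ unitary except possibly the isometry $U_{\beta_0}$ on the essential block $\beta_0=m_e(T)$, and then Furuta's theorem \cite[Theorem~1]{FURUTA} to upgrade that one isometry to a unitary using paranormality of $T^*|_{H_{\beta_0}}$. You instead establish the reduction of each eigenspace from scratch via your key claim, whose core is the equality $N(|S|-\|S\|I)=N(|S^*|-\|S\|I)$ obtained by playing the two paranormal inequalities against one another; your observation that an isometry with paranormal adjoint must be unitary is essentially an inline proof of the special case of Furuta's result that the paper invokes. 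What your approach buys is self-containment and a lemma of independent interest valid for any norm-attaining operator, not only $\mathcal{AN}$-operators; what the paper's approach buys is brevity and a crisp identification of the single block where the extra hypothesis on $T^*$ is actually used.
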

\begin{proof}

If $|T|$ has no eigenvalue with infinite multiplicity, then by \cite[Theorem 3.13]{RAMpara}, $T$ must be normal.

Next assume, that $|T|$ has eigenvalue with infinite multiplicity.
	By \cite[Theorem 3.9]{RAMpara}, $T$ can be represented as
	\begin{equation*}
	T= \left(\underset{\underset{\beta\ne \beta_0}{\beta \in \sigma(|T|)} }{\oplus} \beta U_{\beta}\right)\oplus \beta_0 U_{\beta_0},
	\end{equation*}
	where $\sigma_{ess}(|T|)={\{\beta_0}\}$. Here $U_{\beta}$ is unitary for all $\beta\ne \beta_0$.
	
	Since $|T|$ has an eigenvalue with infinite multiplicity, then it must be $\beta_0$.
	In this case  $U_{\beta_0}$ is an isometry and $H_{\beta_0}=N(|T|-\beta_0 I)$ is a reducing subspace for $T$. Moreover, $T|_{H_{\beta_0}}=\beta_0U_{\beta_0}$. To show $T$ is normal, it is enough to prove that $U_{\beta_0}$ is unitary. As $\beta_0 U_{\beta_0}=T|_{H_{\beta_0}}$ and the restriction of a paranormal operator to any invariant subspace is also paranormal, thus both the operators $U_{\beta_0}$ and $U_{\beta_0}^*$ are paranormal. By \cite[Theorem 1]{FURUTA}, we conclude that $U_{\beta_0}$ is unitary and hence $T$ is normal.
\end{proof}
\section*{acknowledgement}
We thank Professor Hiroyuki Osaka (Ritsumeikan University, BKC campus, Japan) for his suggestions which improved the clarity of the paper.


\begin{thebibliography}{10}
	\bibitem{ANDOhy}T. And\^{o}, On hyponormal operators, Proc. Amer. Math. Soc. {\bf 14} (1963), 290--291.
	
	\bibitem{ANDO}T. Ando, Operators with a norm condition, Acta Sci. Math. (Szeged) {\bf 33} (1972), 169--178.
	
	\bibitem{RAM} N. Bala\ and\ R. Golla, Spectral properties of absolutely minimum attaining operators, Banach J. Math. Anal. {\bf 14} (2020), no.~3, 630--649.
	
	\bibitem{BAX} J. V. Baxley, On the Weyl spectrum of a Hilbert space operator, Proc. Amer. Math. Soc. {\bf 34} (1972), 447--452.
	\bibitem{berbariab1962}
	S. K. Berberian, A note on hyponormal operators, Pacific J. Math. {\bf 12} (1962), 1171--1175.
	
	\bibitem{CARVAJAL }X. Carvajal\ and\ W. Neves, Operators that achieve the norm, Integral Equations Operator Theory {\bf 72} (2012), no.~2, 179--195.
	
	\bibitem{CARVAJALNEVES}X. Carvajal\ and\ W. Neves, Operators that attain their minima, Bull. Braz. Math. Soc. (N.S.) {\bf 45} (2014), no.~2, 293--312.
	
	\bibitem{COBURN} L. A. Coburn, Weyl's theorem for nonnormal operators, Michigan Math. J. {\bf 13} (1966), 285--288.
	
	\bibitem{GAN} J. Ganesh, G. Ramesh\ and\ D. Sukumar, On the structure of absolutely minimum attaining operators, J. Math. Anal. Appl. {\bf 428} (2015), no.~1, 457--470.
	
	\bibitem{GAN1} Ganesh, J; Ramesh, G; Sukumar, D. A characterization of absolutely minimum attaining operators, J. Math. Anal. Appl. {\bf 468} (2018), no.~1, 567--583.
	
	\bibitem{FURUTA}T. Furuta, On partial isometries, Proc. Japan Acad. Ser. A Math. Sci. {\bf 53} (1977), no.~3, 95--97.
	
	\bibitem{Istratescuetal} V. Istr\u{a}\c{t}escu, T. Sait\^{o}\ and\ T. Yoshino, On a class of operators, T\^{o}hoku Math. J. (2) {\bf 18} (1966), 410--413.
	
	\bibitem{Lee} Lee, J.I., On the norm attaining operators. The Korean Journal of Mathematics, {\bf 20} (2012), no.~4, 485–-491.
	
	\bibitem{MARTIN} M. Martin\ and\ M. Putinar, {\it Lectures on hyponormal operators}, Operator Theory: Advances and Applications, 39, Birkh\"{a}user Verlag, Basel, 1989.
	
	\bibitem{MULL} V. M\"{u}ller, {\it Spectral theory of linear operators and spectral systems in Banach algebras}, second edition, Operator Theory: Advances and Applications, 139, Birkh\"{a}user Verlag, Basel, 2007.
	
	\bibitem{PAL}S. K. Pandey\ and\ V. I. Paulsen, A spectral characterization of $\mathcal{AN}$ operators, J. Aust. Math. Soc. {\bf 102} (2017), no.~3, 369--391.
	
	\bibitem{PUTNAM} C. R. Putnam, An inequality for the area of hyponormal spectra, Math. Z. {\bf 116} (1970), 323--330.
	
	\bibitem{RAMpara} G. Ramesh, Absolutely norm attaining paranormal operators, J. Math. Anal. Appl. {\bf 465} (2018), no.~1, 547--556.
	
	\bibitem{SCH} M. Schechter, {\it Principles of functional analysis}, second edition, Graduate Studies in Mathematics, 36, American Mathematical Society, Providence, RI, 2002.
	
	\bibitem{VENKU}D. Venku Naidu\ and\ G. Ramesh, On absolutely norm attaining operators, Proc. Indian Acad. Sci. Math. Sci. {\bf 129} (2019), no.~4, 129:54.
	
	\bibitem{Whitley}
	R. Whitley, A note on hyponormal operators, Proc. Amer. Math. Soc. {\bf 49} (1975), 399--400.
	
	\bibitem{Takeki}T. Yamazaki\ and\ M. Yanagida, Relations between two operator inequalities and their applications to paranormal operators, Acta Sci. Math. (Szeged) {\bf 69} (2003), no.~1-2, 377--389.
\end{thebibliography}
\end{document}